\theoremstyle{plain}
\newtheorem{theorem}{Theorem}[section]
\newtheorem{corollary}{Corollary}[section]
\newtheorem{lemma}{Lemma}[section]
\theoremstyle{definition}
\theoremstyle{remark}
\newtheorem{remark}{Remark}[section]
\newcommand{\res}{\mathop{\rm res}}
\newcommand{\cp}{\mathop{\rm cap}}
\newcommand{\supp}{\mathop{\rm supp}}
\newcommand{\field}[1]{\mathbb{#1}}
\newcommand{\R}{\field{R}}
\newcommand{\N}{\field{N}}
\newcommand{\C}{\field{C}}
\newcommand{\PP}{{\mathcal P}}
\renewcommand{\Re}{\mathop{\rm Re}}
\renewcommand{\Im}{\mathop{\rm Im}}
\newcommand{\dsty}{\displaystyle}
\def\XXint#1#2#3{{\setbox0=\hbox{$#1{#2#3}{\int}$}
\vcenter{\hbox{$#2#3$}}\kern-.5\wd0}}
\title{
Equilibrium measures in the presence of certain rational external fields}
\author{R. Orive and J. S\'{a}nchez Lara}
\date{\today}
\begin{document}

\vspace{1cm} \maketitle

\begin{abstract}
Equilibrium measures in the real axis in the presence of rational external fields are considered. These external fields are called rational since their derivatives are rational functions. We analyze the evolution of the equilibrium measure, and its support, when the size of the measure, $t$, or other parameters in the external field vary. Our analysis is illustrated by studying with detail the case of a generalized Gauss-Penner model, which, in addition to its mathematical relevance, has important physical applications (in the framework of random matrix models). This paper is a natural continuation of \cite{MOR2013}, where equilibrium measures in the presence of polynomial external fields are thoroughly studied.
\end{abstract}
\vspace{0.5cm}

\section{Introduction}
This paper deals with the study of families of equilibrium measures on the real line in the rational external fields. In particular, these measures are considered as  functions of a parameter $t$ representing either the total mass, time (in the context of dynamical systems) or temperature (in a physical sense), following the method used in \cite{MOR2013} for the case of polynomial external fields. These families are regarded as models of many physical processes, which motivates their intense study in the context of the mathematical physics, mainly within the framework of random matrix models. But equilibrium problems in the presence of external fields play an important role also in approximation theory; in particular, in the asymptotics of orthogonal and Heine-Stieljes polynomials and, in connection with the former, in the study of the convergence of sequences of rational interpolants (Pad\'{e} and Pad\'{e}-type approximants).

Next, we introduce the class of external fields we are interested, as well as the main tools for the analysis carried out in Section 2 (see Theorems 1.1 - 1.3 below). To end this section, some applications in approximation theory and random matrices models will be briefly recalled.

\subsection{Equilibrium problems in the rational external fields}
In \cite[sect. 5]{Rakh2012}, E. A. Rakhmanov considers an important class of external fields, namely
\begin{equation}\label{rakhrat}
\varphi (z) = \sum_{j=1}^m\,\gamma_j\,\log\left|\frac{1}{z-z_j}\right|\,,\;\gamma_j\in \R\,,\;z_j\in \C\,.
\end{equation}
These are external fields generated by a system of fixed charges in the set $\emph{A} = \{z_1,\ldots,z_m\}\,.$ They are called rational external fields since the derivative of the complex field $\Phi$ whose real part is $\varphi$ is given by
\begin{equation*}\label{field derivative}
\Phi'(z) = -\,\sum_{j=1}^m\,\frac{\gamma_j}{z-z_j}\,=\,\frac{U(z)}{V(z)}\,,
\end{equation*}
which is a rational function with denominator $V(x) = \prod_{j=1}^m\,(z-z_j)$ of exact degree $m$ and numerator $U$, a polynomial of degree $\leq m-1$ such that $\res_{z=z_j}\, \frac{U(z)}{V(z)} = \gamma_j\,,\;j=1,\ldots,m\,.$ In \cite{OrGa2010}, a particular case of rational external fields \eqref{rakhrat} is handled, namely when all the charges $\gamma_j$ are negative (that is, ``attractive'') and placed in $\C\setminus\R$. There, this class of rational external fields is considered in connection with the asymptotics of sequences of generalized Heine-Stieljes polynomials; see \cite{MFSa2002} for the asymptotics of Heine-Stieltjes polynomials in the classical setting, where all the charges are positive and placed in the real axis, and \cite{MFRa2011} for a more general setting.

The aim of this paper is also related to a physical context: the study of the limit distribution of eigenvalues in random matrix models. In the latter scenario, some classes of rational external fields have been recently considered. For instance, in \cite{Kris2006} the external field $\varphi (x) = x^4 - \log(x^2 +v)\,,\,v > 0\,,$ whose derivative is clearly a rational function (now  the degree of the numerator is greater than the degree of the denominator), is considered in $\R$ in order to provide a toy-model for studying the gluon self-coupling and their interactions with quarks in a baryon. In \cite{Kris2006}, the author is also concerned with the situation when $v \searrow 0\,,$ which leads to the so-called generalized Penner model, for which $\varphi(x) = ax^4+bx^2-c\ln|x|\,,$ extending the classical Gauss-Pener model where $\varphi(x) = x^2-k\ln|x|\,$ (see e.g. \cite{Deo2002} for details).

In this paper, one of our main aims is to study the evolution of the equilibrium measure in the real axis in the presence of rational external fields of the type
\begin{equation}\label{quarticperturblog}
\varphi(x) = P(x)+\sum_{j=1}^{q}\,\alpha_j\,\log |x-z_j|\,,\;q\geq 1\,,\,z_j\in \C \setminus \R\,,\,\alpha_j \in \R\,,
\end{equation}
where $P$ is a polynomial of degree $2p$ (the case $P\equiv 0\,$ is allowed) and it is assumed that at least one of the $\alpha_j$ is different from zero. With this evolution we mean the variation of the equilibrium measure $\lambda$ and its support when its size, $\lambda (\R) = t>0$, or other parameters appearing in the external field vary (below, some definitions and basic properties concerning equilibrium measures in the external fields will be recalled). In this sense, when $p\geq 1$, or what is the same, when the degree of the numerator of the rational function $\varphi'$ is bigger than the degree of the denominator, then we are in the presence of a polynomial external field perturbed by the action of some (``attractive'' or ``repulsive'') fixed charges. In this case, the evolution of $\lambda = \lambda_t$ when $t$ grows from $0$ to $+\infty$ may be studied. The current paper is mainly devoted to this situation. On the other hand, when $P\equiv 0$, that is, when we are given a ``purely rational'' external field, in the sense that it is exclusively due to the action of some fixed charges, we need to impose the extra condition that $\dsty \sum_{j=1}^{q}\,\alpha_j\,>\,t$ in order to guarantee the admissibility of the external field (or, what is the same, the compactness of the support of the equilibrium measure, see e.g. \cite{Saff:97}). Thus, in this particular case just a bounded set of values of $t$ may be considered. Because of this circumstance, this case will be studied with more detail in a forthcoming paper. Note also that when $p=0$, since the equilibrium problem is invariant by addition of constants in the external field, it may be assumed that $P\equiv 0\,.$

In the current paper, along with general considerations about the equilibrium measures in the presence of general rational external fields, we study in detail the simple but illustrative case, important for its applications (see \cite{Kris2006}), where $P$ is an even polynomial of exact degree $4$ and we have a couple of fixed charges placed at conjugate points in the imaginary axis, as the total mass of the measure (also, the ``time'' or ``temperature'', in the dynamical systems or physical, respectively, frameworks) grows from $0$ to $\infty$. Thus, this external field is symmetric with respect to the imaginary axis and this symmetry allows us to study an equivalent simplified problem in the positive semi axis $[0,+\infty)$, which also has an independent interest; for instance, it allows to consider the case where the support has the so-called hard-edges at an endpoint. Through the analysis of this illustrative example, other of our main aims in this paper will be achieved, namely, the study of the dynamics of the equilibrium measure when other parameters of the problem, different from the total mass $t$, vary; in particular, the dynamics when the prescribed charge approaches the real axis is handled.
Recall that the ``quartic'' external field was studied with detail in \cite{MOR2013}, in the framework of general polynomial external fields. On the other hand, the polynomial part of the rational field analyzed in the generalized Gauss-Penner model considered in the present paper has been studied in several previous papers (see e.g. \cite{Bleher99} and \cite{Bleher/Its03}, among others).


Next, we introduce basic notations and recall a number of well-known properties of the equilibrium measures on the real line. Since this paper is a continuation of \cite{MOR2013}, the reader is referred to this reference for more details, as well as to the monograph \cite{Saff:97}.

Since the external field \eqref{quarticperturblog} satisfies the condition
\begin{equation} \label{cond2}
\varliminf\limits_{|x|\to
+\infty} \frac{\varphi(x)}{\log |x|}=+\infty\,,
\end{equation}	
given any $t>0$, there exists a measure $\lambda_t = \lambda_{t,\varphi}$, such that $\lambda_t (\R) = t$, with compact support $S_t \subset \R\,,$ uniquely determined by the equilibrium condition (see e.g. \cite{Saff:97})
\begin{equation}\label{equilibrium}
W_\varphi^{\lambda_t}(z)= V^{\lambda_t}(x) + \varphi (x)\,\begin{cases} = c_t\,,\;& x\in S_t\,, \\ \geq c_t\,,\;& x\in \R\,, \end{cases}
\end{equation}
where for a measure $\sigma\,,$ $\dsty V^{\sigma}(x) = - \int \,\log |x-s|\,d\sigma(s)\,.$ Measure $\lambda_t$ is called the equilibrium measure in the presence of $\varphi$ and minimizes the weighted energy $$I_{\varphi}(\sigma) = -\int \int \log |x-z|\,d\sigma(x)\,d\sigma(z) + 2\int\varphi (x) d\sigma (x)\,,$$
among all measures $\sigma$ supported in the real axis and such that $\sigma (\R) = t\,.$

When we consider the conductor $\Sigma = \R^+ = [0,+\infty)$ instead of the whole real line, the external field $\varphi$ can be extended as an external field in $\R$ by defining $\varphi (x) = +\infty\,,x\in (-\infty,0)\,$; this will be the situation in Section 2 below.
In a similar fashion, when  $\Sigma = K$ is a compact  set on $\R$ and  the external field $\varphi$ is
$$
\varphi(x) =\begin{cases} 0, & \text{if } x\in K, \\
+\infty , & \text{otherwise},
\end{cases}
$$
the corresponding energy minimizer $\lambda_{1}$  in the class of all probability (unit) measures, denoted by $\omega_K$, is known as the \emph{Robin measure} of $K$, its energy   is the \emph{Robin constant}   of $K$,
\begin{equation*} \label{robin}	
\rho(K) = I(\omega_K) = -\int \int \log |x-z|\,d\sigma(x)\,d\sigma(z),
\end{equation*}
and the logarithmic capacity of $K$ is given by $ \cp (K)=\exp(-\rho(K))$.
Under quite mild conditions on $K$ (e.g., regularity with respect to Dirichlet problem, see \cite{Saff:97}), measure $\omega_K$ is characterized by the the fact that  $\supp(\omega_K)=K$ together with the equilibrium condition
\begin{equation*} \label{equilibriumRobin}	
V^{\omega_K}(z) \equiv \rho (K), \quad z \in K.
\end{equation*}
However, for a general external field the support of the equilibrium measure is not known in advance and has to be found from the equilibrium conditions; this is the main problem to determine the equilibrium measure.

In the case of external fields $\varphi$ given by \eqref{quarticperturblog}, it may be assumed without loss of generality that $\dsty P(x) = \frac{x^{2p}}{2p}\,+\sum_{j=1}^{2p-1}\,c_j\,x^j\,,\;p\geq 1\,,$ and then the derivative of $\varphi$ is a rational function of the form:
\begin{equation*}\label{derivef}
\varphi'(x) = P'(x) + \sum_{j=1}^{q}\,\frac{\alpha_j\,(x-\Re z_j)}{(x-z_j)(x-\overline{z_j})}\,=\,\frac{D(x)P'(x)+C(x)}{D(x)}\,=\,\frac{E(x)}{D(x)}\,,
\end{equation*}
with $\dsty D(x) = \prod_{j=1}^{q}\,(x-z_j)(x-\overline{z_j})\,$ and $C\in \PP_{2q-1}\,,$ such that $\dsty \sum_{j=1}^{q}\,\frac{\alpha_j\,(x-\Re z_j)}{(x-z_j)(x-\overline{z_j})}\,=\,\frac{C(x)}{D(x)}\,.$

Now, we are concerned with two main ingredients for the study of the equilibrium measures in the external field \eqref{quarticperturblog}. The first one is an algebraic equation satisfied by the Cauchy transform of the equilibrium measure $\lambda_t$. In this sense, as an immediate consequence of \cite[Theorem 2]{MOR2013} we have
\begin{theorem}\label{Rrational}
For the rational external field \eqref{quarticperturblog}, the associated equilibrium measure, $\lambda_t$, and its Cauchy transform $\displaystyle\widehat{\lambda_t}(z)=\int \frac{d\lambda_t(s)}{z-s}$, it holds\vspace{-0.2cm}
\begin{equation}\label{Cauchytr}
(-\widehat{\lambda_t} + \varphi')\,(z)\,=\,\sqrt{R(z)}\,=\,\frac{B(z)\,\sqrt{A(z)}}{D(z)}\,,\;z\in \C \setminus S_t\,,
\end{equation}
for polynomials $\dsty A(z) = \prod_{j=1}^{2k}\,(z-a_j)$ and $B(z) = \dsty \prod_{j=1}^{2(p+q)-k-1}(z-b_j)$, with $a_1<\ldots <a_{2k} \in \R\,,$ $1\leq k\leq p+q$. We consider the branch of $\sqrt{A(z)}$ such that $\sqrt{A(z)} > 0$ for $z \in (a_{2k}, +\infty) \subset \R$.
\end{theorem}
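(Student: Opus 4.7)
The plan is to follow the argument for Theorem 2 of \cite{MOR2013} for polynomial external fields, adjusting only for the extra poles that $\varphi'$ acquires at the charges $z_j,\overline{z_j}$. I would first differentiate the equilibrium condition \eqref{equilibrium} on the interior of $S_t$ to obtain $\mathrm{P.V.}\int d\lambda_t(s)/(x-s)=\varphi'(x)$, then apply the Sokhotski--Plemelj formulas to the function $h(z):=-\widehat{\lambda_t}(z)+\varphi'(z)$, which yield $h(x+i0)+h(x-i0)=0$ and $h(x+i0)-h(x-i0)=2\pi i\lambda_t'(x)$ on $\mathrm{int}(S_t)$. Squaring removes the jump of $h$ across $S_t$, so $h^2$ extends analytically through $S_t$. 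Off $S_t$, $h$ is already meromorphic on $\C$ with singularities only at the simple poles of $\varphi'$ at $z_j,\overline{z_j}$, so $R(z):=h(z)^2$ is a rational function with double poles only at those points; equivalently, $D(z)^2R(z)$ is entire.

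Next, I would expand at infinity: since $\widehat{\lambda_t}(z)=t/z+O(1/z^2)$ and $\varphi'(z)=P'(z)+O(1/z)$, the function $h$ has polynomial growth of exact order $2p-1$, and hence $D^2R$ is a polynomial of degree $4p+4q-2=2\bigl(2(p+q)-1\bigr)$. The second Sokhotski--Plemelj relation gives $h(x+i0)=i\pi\lambda_t'(x)$ on $\mathrm{int}(S_t)$, so $R(x)\le 0$ there, while $R$ is real on $\R\setminus S_t$ and nonnegative away from its zeros. Therefore each endpoint of $S_t$ is a real zero of $D^2R$ of odd multiplicity, every other real zero has even multiplicity, and the complex zeros come in conjugate pairs of equal multiplicities.

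Finally, I would write $D^2R=A\cdot B^2$, where $A(z)=\prod_{j=1}^{2k}(z-a_j)$ collects the real zeros of odd multiplicity (which are simple, under the admissibility condition \eqref{cond2}, as proved in the polynomial case in \cite{MOR2013}) and $B^2$ absorbs the remaining factors; this gives $\sqrt{R}=B\sqrt{A}/D$, with the branch stated in the theorem being the one consistent with the positivity of the leading coefficient of $P'$, so that $h(x)\to +\infty$ as $x\to+\infty$. A degree count in $D^2R=AB^2$ yields $\deg B=2(p+q)-k-1$, and the bound $1\le k\le p+q$ reflects that $S_t$ is a nonempty union of at most $p+q$ intervals. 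The main obstacle is not the algebra, which is essentially mechanical, but the verification that the odd-multiplicity real zeros are simple; this rests on the regularity of $\lambda_t$ and is proved in the polynomial setting in \cite{MOR2013}, and since the rational perturbation is real-analytic on $\R$ (as $z_j\notin\R$), that argument transfers to the present setting verbatim.
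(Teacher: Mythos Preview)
Your proposal is correct and aligns with the paper's treatment: the paper does not give a self-contained proof of this theorem but simply states it ``as an immediate consequence of \cite[Theorem 2]{MOR2013}'', and your outline is precisely the adaptation of that polynomial-field argument to the rational case, with the only new feature being the double poles of $R$ at $z_j,\overline{z_j}$ cleared by $D^2$. Your degree count and identification of the simplicity issue are accurate, so there is nothing to correct.
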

Of course, points $a_j$ and $b_j$ and, consequently, functions $A,B$ and $R$ depend on $t$, but we will omit this dependence in our notation for simplicity. The zeros of $A$ (zeros of odd order of $R$) are the endpoints of the $k$ intervals comprising the support $S_t$ of $\lambda_t$; observe that the number of such intervals is given by $k$, with $1\leq k\leq p+q$.

Identity \eqref{Cauchytr} also provides the expression for the density of the equilibrium measure $\lambda_t$,
\begin{equation}\label{density}
 d   \lambda_t(x)=\frac{1}{\pi}\,\sqrt{|R(x)|}\,dx,\quad x\in S_t\,,
    \end{equation}
and its chemical potential,
\begin{equation}\label{totalpotential}
    W_\varphi^{\lambda_t}(z)= \Re \,\int^z\,\sqrt{R(x)}\,dx\,,
    \end{equation}
with an appropriate choice of the lower limit of integration and of the branch of the square root. In addition, by equilibrium condition \eqref{equilibrium}, \eqref{totalpotential} implies that
\begin{align}
\label{periods}
\int_{a_{2j}}^{a_{2j+1}}\,\sqrt{R(x)}\,dx = & 0\,, \quad  j=1,\cdots ,k-1\,,
\end{align}
which means that $B$ has at least a zero of odd order on each gap $(a_{2j},a_{2j+1})$ of $S_t\,.$ In the simplest case when $k=1$ (connected support: ``one-cut''), relation \eqref{Cauchytr} allows us to get a system of nonlinear equations whose solutions are the parameters of the problem, that is, the zeros of polynomials $A$ (endpoints of the support) and $B$ (remainder zeros of the density function of the equilibrium measure). When $k>1$, we need to make use of extra conditions \eqref{periods}, and thus, complicated hyperelliptic integrals are involved. This is the reason why it is not possible to obtain a totally explicit expression of the density of the equilibrium measure in the general multi-cut case.

Other important ingredient for our analysis is the dynamical system for the parameters of the problem which could be derived from the seminal Buyarov-Rakhmanov result (see the original paper \cite{Buyarov/Rakhmanov:99} and \cite[Theorem 3]{MOR2013}), which basically asserts that at a certain ``instant'' $t_0$, the derivative of the equilibrium measure $\lambda_t$ with respect to $t$ is given by the Robin measure for the support $S_{t_0}$. In the rational case, taking into account these results and the well-known expression for the Robin measure of a finite union of compact intervals, we have that except for a denumerable set of values of $t$,
\begin{equation}\label{BuyRakhrat}
\frac{\partial}{\partial t}\,\left(\frac{B(z)\,\sqrt{A(z)}}{D(z)}\right)\,=\,-\,\frac{F(z)}{\sqrt{A(z)}}\,,
\end{equation}
where $F$ is a monic polynomial of degree $k-1$ such that $\dsty \int_{a_{2j}}^{a_{2j+1}}\,\frac{F(x)}{\sqrt{A(x)}}\,,dx\,=\,0\,,\;j=1,\ldots,k-1\,,$ which means that $F$ has a zero on each gap $(a_{2j},a_{2j+1})$ of the support. As in \cite{MOR2013}, we will make use of the abbreviate physical notation for the derivative with respect to the ``time'' $t$: $\dsty \dot{f} = \frac{\partial f}{\partial t}\,.$ Thus, using \eqref{BuyRakhrat}, one immediately obtains:
\begin{equation}\label{Withamrat}
2A(x)\dot{B}(x) + B(x)\dot{A}(x)  = -2D(x)F(x)\,,
\end{equation}
which is an extension of \cite[formula (52)]{MOR2013} for the rational case. Now, evaluating expression \eqref{Withamrat} in the zeros of $A$ and $B$, it yields
\begin{theorem}\label{dynamicalsystem}
Except for a denumerable set of values of $t$, it holds
\begin{equation}\label{variationzeros}
\begin{split}
\dot{a_j} & = \,\frac{2D(a_j)F(a_j)}{\prod_{k\neq j}(a_j-a_k)\,B(a_j)}\,,\,j=1,\ldots,2k \,,\\
\dot{b_j} & = \,\frac{D(b_j)F(b_j)}{\prod_{k\neq j}(b_j-b_k)\,A(b_j)}\,,\,j=1,\ldots,2(p+q)-k-1\,,
\end{split}
\end{equation}
with $1\leq k\leq p+q\,.$
\end{theorem}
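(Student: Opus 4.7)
The plan is to evaluate the polynomial identity \eqref{Withamrat} at the zeros of $A$ and of $B$ separately. Outside the denumerable exceptional set of $t$ already singled out in the Buyarov--Rakhmanov framework that produced \eqref{BuyRakhrat}, the zeros $\{a_j\}$ and $\{b_j\}$ are simple, mutually distinct, and smooth functions of $t$. This allows one to differentiate the factorizations
\begin{equation*}
A(x) = \prod_{i=1}^{2k}(x-a_i), \qquad B(x) = \prod_{i=1}^{2(p+q)-k-1}(x-b_i)
\end{equation*}
with respect to $t$ (with $x$ held fixed), obtaining
\begin{equation*}
\dot{A}(x) = -\sum_{i} \dot{a_i} \prod_{\ell \neq i}(x-a_\ell), \qquad \dot{B}(x) = -\sum_{i} \dot{b_i} \prod_{\ell \neq i}(x-b_\ell).
\end{equation*}
Setting $x=a_j$ in the first sum, every summand except $i=j$ vanishes because of the factor $(a_j-a_j)$, leaving $\dot{A}(a_j) = -\dot{a_j}\prod_{\ell\neq j}(a_j-a_\ell)$; analogously, $\dot{B}(b_j) = -\dot{b_j}\prod_{\ell\neq j}(b_j-b_\ell)$.

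Next I would substitute $x=a_j$ into \eqref{Withamrat}. Since $A(a_j)=0$, the term $2A(x)\dot{B}(x)$ drops out, leaving
\begin{equation*}
B(a_j)\,\dot{A}(a_j) = -2D(a_j)F(a_j).
\end{equation*}
Plugging in the expression for $\dot{A}(a_j)$ and solving for $\dot{a_j}$ yields the first line of \eqref{variationzeros}. The companion computation at $x=b_j$ uses $B(b_j)=0$ to kill the term $B(x)\dot{A}(x)$ in \eqref{Withamrat}, giving $2A(b_j)\dot{B}(b_j) = -2D(b_j)F(b_j)$; substituting $\dot{B}(b_j)$ and cancelling the factor of $2$ produces the second line of \eqref{variationzeros}.

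The arithmetic is essentially a one-line manipulation once \eqref{Withamrat} is in hand, so the only real obstacle is the justification of the smoothness claim used in the first step. The denumerable exceptional set corresponds to bifurcation times for the support: coalescence of two endpoints $a_j$, birth or death of a cut, a zero of $B$ crossing out of a gap, or a zero of $B$ meeting a zero of $A$. Away from these isolated values of $t$ the implicit function theorem, applied to the algebraic system determining the $a_j$ and $b_j$ from the equilibrium conditions \eqref{Cauchytr} and \eqref{periods}, guarantees the required smooth dependence on $t$ and ensures $B(a_j)\neq 0$, $A(b_j)\neq 0$ so that the denominators in \eqref{variationzeros} do not vanish. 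This smoothness is precisely the content of the Buyarov--Rakhmanov theorem invoked to obtain \eqref{BuyRakhrat}, so once one appeals to it there is nothing further to prove.
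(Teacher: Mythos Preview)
Your proposal is correct and follows exactly the approach indicated in the paper, which derives \eqref{variationzeros} simply by evaluating the polynomial identity \eqref{Withamrat} at the zeros of $A$ and $B$. The paper compresses this into a single sentence, whereas you have spelled out the computation of $\dot{A}(a_j)$ and $\dot{B}(b_j)$ and added a brief discussion of the smoothness hypothesis; but the substance is the same.
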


Indeed, \eqref{variationzeros} is a (autonomous) dynamical system for the positions of all the important points determining the equilibrium measure and its support. In \cite{MOR2013} it was shown how a suitably combined use of \eqref{Cauchytr} and \eqref{variationzeros} can explain the whole dynamics of the problem for the polynomial case, specially the more ``intriguing'' phenomena of the phase transitions.
 Recall that for real-analytic external fields (see e.g. \cite{MOR2013}), the equilibrium measure and its support depend analitically of $t$ except for a denumerable set of ``critical'' values of $t$, the singularities. These singularities take place when collisions/bifurcations of zeros of polynomials $A$ and/or $B$ occur. In the case of real analytic external fields, just three types of generic singularities can occur, namely (using the classification in \cite{MR2001g:42050} and \cite{KuML 00}),

\begin{itemize}
\item \textbf{Singularity of type I:} at a time $t=T$ a real zero $b$ of $B$ (a double zero of $R_t$) splits into two simple zeros $a_-<a_+$, and the interval $[a_-,a_+]$ becomes part of $S_t$ (\emph{birth of a cut}); we assume that for $t$ in a neighborhood of $t=T$, $b$ is a simple zero of $B$. This type of singularity occurs by saturation of inequality in \eqref{equilibrium};
 \item \textbf{Singularity of type II:} at a time $t=T$ two simple zeros $a_{2s}$ and $a_{2s+1}$ of $A$ (simple zeros of $R_t$) collide (\emph{fusion of two cuts} or \emph{closing of a gap}).
 \item \textbf{Singularity of type III:} at a time $t=T$ a pair of complex conjugate zeros $b$ and $\overline b$ of $B$ (double zeros of $R_t$) collide with a simple zero $a$ of $A$, so that $\lambda_T'(x)=\mathcal O(|x-a|^{5/2})$ as $x\to a$.
 \end{itemize}
In the case where the number of cuts is bounded by $2$, these are just all the possible types of singularities, while where it can be greater than $2$, more intriguing phenomena can occur when two or more of these singularities take place simultaneously. However, in \cite{MOR2013} it is pointed out that they are the basic ``building blocks'' of all phase transitions.

Though maybe the main aim of this paper is extending this analysis to the case of rational external fields (see Section 2.1 below), another aim, also very important for us, is the study of the evolution of the equilibrium measure when other parameters, different from the total mass $t$, vary.

In order to do it, in \cite{MOR2013} the authors extended the seminal Buyarov-Rakhmanov result for the variation of the equilibrium measure with respect to other parameters in the external field. Here, we make use of a simplified version of this result \cite[Theorem 5]{MOR2013}, which is sufficient for our purposes.
\begin{theorem}\label{parameters}
Let $t>0$ be fixed and suppose that the function $\varphi(x;\tau)$ is real-analytic for $x\in \R$ and $\tau \in (c,d)$, where $(c,d)$ is a real interval. Let $\lambda = \lambda_{t,\tau}$ denote the equilibrium measure in the external field $\varphi(x;\tau)$, for $\tau \in (c,d)$, with support $S_{t,\tau}$. Then, for any $\tau_0 \in (c,d)$, $$\frac{\partial \lambda}{\partial \tau}\,|_{\tau=\tau_0} = \omega\,,$$ where the measure $\omega$ is uniquely determined by the conditions
\begin{equation}\label{condparam}
\supp \omega = S_{t,\tau_0}\,,\;\omega (S_{t,\tau_0}) = 0\,,\;V^{\omega} + \frac{\partial \varphi(x;\tau)}{\partial \tau}\,|_{\tau=\tau_0} = \frac{\partial c_t}{\partial \tau}\,|_{\tau=\tau_0} = const\;\;\text{on}\;\; S_{t,\tau_0}
\end{equation}

\end{theorem}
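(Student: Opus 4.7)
The plan is to follow the Buyarov-Rakhmanov strategy \cite{Buyarov/Rakhmanov:99, MOR2013}, now applied to variation in the auxiliary parameter $\tau$ instead of the total mass $t$: first show that $\omega := \partial_\tau \lambda_{t,\tau}\bigr|_{\tau=\tau_0}$ satisfies the three conditions \eqref{condparam}, and then show that these three conditions uniquely pin down the signed measure, thereby identifying $\omega$ with $\partial_\tau \lambda|_{\tau_0}$.

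For the existence half, I would start from the equilibrium identity
\begin{equation*}
V^{\lambda_{t,\tau}}(x) + \varphi(x;\tau) = c_{t,\tau}, \quad x \in S_{t,\tau}.
\end{equation*}
For any $x$ in the interior of $S_{t,\tau_0}$, continuous dependence of the support on $\tau$ ensures that $x\in S_{t,\tau}$ for all $\tau$ in a neighborhood of $\tau_0$, so the equality persists as an identity in $\tau$. Differentiating at $\tau_0$ and interchanging the derivative with the logarithmic integral defining $V^{\lambda_{t,\tau}}$ — legitimate thanks to the smoothness of the density \eqref{density} and the real-analytic dependence on $\tau$ of the zeros of $A$ and $B$ in \eqref{Cauchytr} for $\tau_0$ outside the denumerable exceptional set — yields
\begin{equation*}
V^{\omega}(x) + \frac{\partial \varphi}{\partial \tau}(x;\tau_0) = \frac{\partial c_t}{\partial \tau}(\tau_0),
\end{equation*}
first on $\mathrm{int}(S_{t,\tau_0})$ and then, by continuity, on all of $S_{t,\tau_0}$. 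Since $\lambda_{t,\tau}(\mathbb{R})\equiv t$, differentiating gives $\omega(\mathbb{R})=0$, hence $\omega(S_{t,\tau_0}) = 0$. The support condition $\supp\omega\subseteq S_{t,\tau_0}$ follows because $\lambda_{t,\tau}$ vanishes identically off $S_{t,\tau}$ and, for $\tau$ close to $\tau_0$, the support $S_{t,\tau}$ is contained in an arbitrarily small neighborhood of $S_{t,\tau_0}$.

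For uniqueness, suppose $\omega_1$ and $\omega_2$ both satisfy \eqref{condparam} and set $\nu := \omega_1 - \omega_2$. Then $\nu$ is a signed measure with $\supp\nu \subseteq S_{t,\tau_0}$, $\nu(\mathbb{R})=0$, and $V^\nu\equiv \mathrm{const}$ on $\supp\nu$. Integrating the last relation against $\nu$ and using $\nu(\mathbb{R})=0$ gives
\begin{equation*}
I(\nu) \,=\, -\int V^\nu\,d\nu \,=\, -\mathrm{const}\cdot \nu(\mathbb{R}) \,=\, 0.
\end{equation*}
By the positive definiteness of the logarithmic energy on compactly supported neutral signed measures of finite energy (see \cite{Saff:97}), one concludes $\nu \equiv 0$, establishing uniqueness.

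The main obstacle is the rigorous justification of differentiation under the potential and the analytic dependence of $S_{t,\tau}$ on $\tau$. For regular $\tau_0$ this follows from the implicit function theorem applied to the algebraic system that determines the zeros of $A$ and $B$ (together with the period conditions \eqref{periods}), exactly as in the $t$-variation analysis of \cite{MOR2013}. At singular $\tau$-values the differentiability may fail and the statement would require a separate approximation or one-sided argument; but the conclusion at regular $\tau_0$ is already sufficient for the applications developed in Section~2.
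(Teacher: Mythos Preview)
The paper does not prove this theorem at all: it is quoted as a simplified version of \cite[Theorem~5]{MOR2013}, and the surrounding text only comments on the interpretation of the neutrality condition $\omega(S_{t,\tau_0})=0$. So there is no in-paper argument to compare against; your outline is essentially the Buyarov--Rakhmanov-style argument one would expect the cited reference to give.

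Your sketch is sound in structure, with two small points worth tightening. First, in the uniqueness step the sign is off relative to the paper's convention $V^\sigma(x)=-\int\log|x-s|\,d\sigma(s)$: one has $I(\nu)=\int V^\nu\,d\nu$, not $-\int V^\nu\,d\nu$. The conclusion is unaffected since $V^\nu$ is constant on $S_{t,\tau_0}\supseteq\supp\nu$ and $\nu$ is neutral, but you should fix the sign and note that the constant-potential identity holds on all of $S_{t,\tau_0}$ (from subtracting the two conditions), not merely on $\supp\nu$. Second, the condition ``$\supp\omega=S_{t,\tau_0}$'' in \eqref{condparam} should be read as ``$\omega$ is a (signed) measure on $S_{t,\tau_0}$''; your argument establishes $\supp\omega\subseteq S_{t,\tau_0}$, which is what is actually needed for the characterization, and you should say so explicitly rather than leave the apparent mismatch unaddressed.
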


Observe that the second formula in \eqref{condparam} means that $\omega$ is a type of signed measure which is often called a \emph{neutral} measure; it is a natural consequence of the fact that $t$, the total mass of $\lambda$, does not depend on parameter $\tau$. In \cite[Section 3.4]{MOR2013}, this result was used only for the one-cut case. One of our aims in the present paper is making use of this relations also when the support comprises two intervals (see Section 2.2 below).

In Section 2, an important particular case of equilibrium problems in the presence of rational external fields, the so-called generalized Gauss-Penner model will be treated with detail. First, some important applications will be recalled.

\subsection{Applications in approximation theory and random matrix models}
In this subsection, we restrict ourselves to include a brief description of some applications of the equilibrium problems in the presence of external fields. These applications are well known in the literature (see e.g. the extensive introduction of \cite{MOR2013}), but it seems convenient to provide this brief account to make the paper self-contained.

Our interest in studying equilibrium problems in the presence of external fields initially comes from their wide range of applications in Approximation Theory, in particular, those related to the asymptotic behavior of orthogonal and Heine-Stieltjes polynomials. However, the applications in Random Matrix Theory are also (and, even, more) important. Thus, we restrict ourselves to these two fields, in spite that there are also very interesting applications to Integrable Systems (in particular, the so-called Toda lattices, see \cite{DeML} and references therein) and in Soliton Theory \cite{LaxLev}.

Equilibrium problems in the external fields has become an essential tool for studying the asymptotic behavior of orthogonal polynomials with exponential or, in general, varying weights. Let us recall briefly some topics in this sense (for details, see \cite{Saff:97}).

Let $P_N = P_{N,n}$ be the monic polynomial of degree $N$ satisfying the orthogonality relations
\begin{equation*}\label{varyingorthog}
\int_{\R}\,x^j\,P_N(x)\,e^{-2n\varphi (x)}\,dx\,=\,0\,,\;j=0,\ldots,N-1\,,
\end{equation*}
where $\varphi$ is a continuous function satisfying \eqref{cond2}. It is often interesting studying the asymptotics of polynomials $P_{N,n}$ when both $N,n\rightarrow \infty$ in such a way that the ratio $\frac{N}{n}\rightarrow t>0\,.$ In particular, denoting by $\dsty \nu_n = \,\frac{1}{n} \sum_{P_N (\zeta)=0}\,\delta_{\zeta}$, it is well-known that it holds
$$\nu_n\,\stackrel{*}{\longrightarrow} \lambda_t(\varphi)\,,$$
where symbol $\stackrel{*}{\longrightarrow}$ stands, as usual, for weak-* convergence. This result extends, under rather mild conditions, to the case of more general varying weights $\omega_n = e^{-\phi_n(x)}$, provided that $\dsty \lim_{n\rightarrow \infty}\,\frac{1}{2n}\,\phi_n = \varphi\,.$ In this sense, external fields of the form \eqref{quarticperturblog} are related with a class of modified varying Freud weights, of the form
$$\omega(z)=e^{-2nP(z)}\prod_j |z-z_j|^{-\alpha_j n}\,.$$
If the exponents $\alpha_j$ are all positive integers, then we deal with a Geronimus transformation of the Freud weight, while when all $\alpha_j$ are negative integers, then a Christoffel transformation is considered. The results of this paper are related with
the dynamics for the limit distributions of the zeros of the varying orthogonal polynomials as some parameters of the generalized Freud weight above vary.

The discrete counterpart of the problem above is related to the so-called weighted Fekete points (see also \cite{Saff:97} for a deeper study of this topic). Given a continuous external field $\varphi$, for any configuration of points $(\zeta_1,\ldots,\zeta_n)\,,\;$ with $\zeta_1,\ldots,\zeta_n \in \R,$ and $\zeta_1 \leq \ldots \leq \zeta_n$, we consider the (discrete) weighted energy given by
\begin{equation}\label{discreteenergy}
\begin{split}
E_{\varphi}(\zeta_1,\ldots,\zeta_n) & = \sum_{i<j}\,\log \,\frac{1}{|\zeta_i - \zeta_j|}\,+\,\sum_{i=1}^n\,\varphi(\zeta_i)\\
& = \frac{1}{2}\,\left(\sum_{i\neq j}\,\log \,\frac{1}{|\zeta_i - \zeta_j|}\,+\,2\,\sum_{i=1}\,\varphi(\zeta_i)\right)\,.
\end{split}
\end{equation}
Then, if for each $n\in \N$, $(\zeta_1^*,\ldots,\zeta_n^*)$ is a minimal configuration for \eqref{discreteenergy}, we have (see \cite[Ch. III]{Saff:97}):
$$\nu_n = \frac{1}{n}\,\sum_{i=1}^n\,\delta_{\zeta_i^*}\,\stackrel{*}{\longrightarrow}\,\lambda_1(\varphi)\,.$$ Observe that, in general, this minimum is not unique; see, for instance, the example at the beginning of Section 2.2 in \cite{OrGa2010}. In fact, that example is related to one of the best known type of weighted Fekete points: the zeros of Heine-Stieltjes polynomials. Let us say a word about these polynomials. For certain rational external fields $\varphi$, the polynomials $\dsty y(x) = y_n(x) = \prod_{i=1}^n\,(x-\zeta_i^*)\,,$ whose zeros are the minimizers of \eqref{discreteenergy}, satisfy a second order linear differential equation with polynomial coefficients, $\dsty A_n y'' + B_n y' + C_n y = 0\,.$ Indeed, this is the case when $\dsty \varphi(x) = \sum_{j=1}^q\,\alpha_j\,\log\,|x-a_j|\,,$ with real $a_j\in \R$ and $\alpha_j <0\,,\;j=1,\ldots,q\,,$ which corresponds with a particular case of the rational fields considered in the current paper, namely, the ``purely'' rational external fields (that is, when $P\equiv 0$ in \eqref{quarticperturblog}). Polynomial solutions $y(x)=y_n (x)$ and polynomial coefficients $C_n(x)$ are called Heine-Stieltjes and Van Vleck polynomials, respectively and their study was built on the seminal works by T. J. Stieltjes (see e.g. \cite{Stieltjes1885}). The simplest situation, corresponding to the case when $q=2$, is related to the well-known Jacobi polynomials setting. Asymptotic properties of Heine-Stieltjes and Van Vleck polynomials were extensively studied in \cite{MFSa2002}, for the the classical Heine-Stieltjes setting.

On the other hand, in \cite{OrGa2010} the case where $\dsty a_j \in \C \setminus \R$ and $\alpha_j > 0\,,\;j=1,\ldots,q$ (that is, an ``attractive'' model) was studied, in the sense of both the discrete and continuous equilibrium problems. This problem is the natural real counterpart of an equilibrium problem in the unit circle previously analyzed in \cite{Gr 02} and \cite{MMO2005}.

Finally, let us note that for general external fields in \eqref{quarticperturblog}, i.e. with $P\neq 0$, or what is the same, when a polynomial external field adds to the effect of the mutual repulsion of the charges, the monic polynomial whose zeros are the corresponding weighted Fekete points also satisfy a second order linear differential equation with polynomial coefficients.

As we said above, the other large circle of applications of equilibrium measures in external fields of great interest from our viewpoint is that related to Random Matrix models. This is an important theory within the mathematical physics and, more precisely, the statistical mechanics. It is often assumed that this theory received a major boost from the efforts of E. Wigner in the 1950s for finding a mathematical model for the Hamiltonian of a heavy nucleus. Indeed, he found the behavior of the eigenvalues of Hermitian random matrices to be a suitable toy-model for this process (see e.g. \cite{Deiftbook} for some historical remarks in this sense).

Specifically, let us consider the set of $N\times N$ Hermitian matrices
$$
\left\{ M=\left( M_{jk}\right)_{j,k=1}^N:\; M_{kj}=\overline{M_{jk}} \right\}\,,
$$
as endowed with the  joint probability distribution
$$
d\nu_N(M)=\frac{1}{\widetilde Z_N}\, \exp\left( -  \mathrm{Tr} \, V(M)\right)\, dM, \qquad
dM=\prod_{j=1}^N dM_{jj}\prod_{j\not=k}^N d\Re M_{jk}d\Im M_{jk},
$$
where $V : \, \R \to \R$ is a given function such that
the integral in the definition of the normalizing constant
$$
\widetilde Z_N = \int \exp\left( -  \mathrm{Tr} \, V(M)\right)\, dM
$$
converges. Then, it is well-known (see e.g. \cite{Mehta2004}) that $\nu_N$ induces a joint probability distribution $\mu_N$ on the  eigenvalues $\lambda_1<\dots <\lambda_N$ of these matrices, with the density
\begin{equation*}
\label{Eigdensity}
\mu_N' (\bm \lambda)  = \frac{1}{ Z_N}\, \prod_{i<j} (\lambda_i-\lambda_j)^2 \exp\left( -  \sum_{i=1}^N V(\lambda_i)\right) ,
\end{equation*}
and with the corresponding \emph{partition function}
$$
Z_N=\int_\R\dots\int_\R \, \prod_{i<j} (\lambda_i-\lambda_j)^2 \exp\left( -  \sum_{i=1}^N V(\lambda_i)\right) \, d\lambda_1\dots d\lambda_N.
$$
In this sense, the \emph{free energy} of this matrix model is defined as
\begin{equation*}
F_N=-\frac{1}{N^2}\log Z_N.
\end{equation*}
In the physical context it is very important to study the (thermodynamical) limit
\begin{equation*}
F_\infty=\lim_{N\to\infty}F_N
\end{equation*}
(the so-called infinite volume free energy). The existence of this limit
has been established  under very general conditions on $V$, see e.g.~\cite{Johansson}, and is given in terms of the equilibrium measure in the external field $\varphi = \frac{V}{2}$.

It has been particularly studied the case of polynomial potentials $V$, and more precisely, the situation when $V$ is a quartic polynomial (see e.g. the recent monograph by Wang \cite{Wangbook} or the papers \cite{AMM2010}, \cite{BlEy2003}, \cite{Bleher99}, \cite{BPS95}, \cite{KuML 00} and \cite{MOR2013}, among many others), paying special attention to the phase transitions. In the current paper, rational external fields are considered. The motivation comes from, for instance, the generalized Gauss-Penner model considered in \cite{Kris2006}, a $1$-matrix model whose action is given by $\dsty V(M) = tr\,\left(M^4-\,\log (v+M^2)\right)\,,$ to get a computable toy-model for the gluon correlations in a baryon background. The dimensionless parameter $v>0$ stands for the ratio of quark mass to coupling constant, and the logarithmic term (responsible of the rational nature of the external field) encapsulates the effect of the $N$-quark baryon.

\section{Dynamics of the equilibrium measure in the generalized Gauss-Penner external field}
As it was announced above, through this section the simple but illustrative example of the so-called generalized Gauss-Penner model will be studied. Namely, we analyze the evolution of the equilibrium measure $\lambda = \lambda_t$ with support in the real axis in the presence of the external field
\begin{equation}\label{real axis}
\varphi(x) = \frac{x^4}{4} + b x^2\,+\,c\,\log(x^2+v)\,,\;b, c \in \R\,,\,v >0\,,
\end{equation}
when the total mass of the measure, $t$, grows from $0$ to $+\infty$; in addition, the variation of the equilibrium measure when the prescribed charge approaches the real axis, that is, when $v\searrow 0$, will be also studied (for $v=0$ and $c <0$, the classical Gauss-Penner setting is recovered, see \cite{Deo1993} and \cite{Deo2002}, among others).

Since the external field is an even function, this problem is equivalent to study the equilibrium measure $\mu = \mu_t$ of total measure $t$ and support in the real semi-axis $[0,+\infty)$ in the presence of the external field (see \cite[Th. 1.10]{Saff:97})
\begin{equation}\label{semiaxis}
\Psi(x) = 2\varphi (\sqrt x) = \frac{x^2}{2} + 2 b x + 2 c \log(x+v)\,,\,x\in[0,+\infty)\,.
\end{equation}
In addition, the respective densities are related by the expression
\begin{equation}\label{reldensities}
\mu'(x) = \frac{\lambda'(\sqrt{x})}{\sqrt{x}}\,.
\end{equation}
The outline of this Section is as follows. In Subsection 2.1, the evolution of the equilibrium measure when $t$ grows from $0$ to $\infty$ will be analyzed, while our concern in Subsection 2.2 will be the variation of the equilibrium measure when the position of the charge $v\searrow 0$ (that is, when this prescribed charge approaches the real axis).

\subsection{Dynamics with respect to the size of the measure}

Now, the variation of the equilibrium measure $\lambda_t$ when the size of the measure travels across $(0,+\infty)$ is handled. From results in Theorem 1.1 we know that in this case the numerator of $R$ has degree $10$ and $deg A \in \{2,4,6\}$ and, thus, $\supp \lambda_t$ consists of $1,2$ or $3$ intervals. Note that the symmetry of the external field $\varphi$ implies the symmetry of the support. As it was said above, it is equivalent to study the equilibrium problem in $[0,+\infty)$ for the external field \eqref{semiaxis}. Furthermore, in order to reduce the number of parameters of the equilibrium problem, it is useful the following technical result.
\begin{lemma}\label{lemma:simplif}
 If $\mu_1$ is the equilibrium measure of total mass $t_1$ in $\Sigma_1$, in the presence of the external field $\varphi_1$ and with density function $\mu'_1(x)=\omega_1(x)$, and $\supp \mu_1=\Delta_1$; then, for any $\delta,\kappa>0$ the measure $\mu_2$, with density $\omega_2(x)=\mu'_2(x)=\delta \kappa\omega_1(\delta  x)$, is the equilibrium measure of total mass $t_2=\kappa t_1$ for $\Sigma_2=\Sigma_1/\delta$ in the external field $\varphi_2(x)=\kappa\varphi_1(\delta x)$.
\end{lemma}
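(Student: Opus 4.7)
The plan is to verify the defining equilibrium conditions for $\mu_2$ directly, using the known conditions for $\mu_1$. By the uniqueness of the equilibrium measure (guaranteed under the standing admissibility hypotheses on the external field), it suffices to check three things: that $\mu_2$ has the claimed total mass $\kappa t_1$, that its support lies in $\Sigma_2$, and that the weighted potential $V^{\mu_2}+\varphi_2$ is constant on $\supp \mu_2$ and at least as large elsewhere on $\Sigma_2$.

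First I would handle the total mass by the change of variable $u=\delta x$:
\begin{equation*}
\int \omega_2(x)\,dx \,=\, \int \delta\kappa\,\omega_1(\delta x)\,dx \,=\, \kappa \int \omega_1(u)\,du \,=\, \kappa t_1.
\end{equation*}
The same change of variable shows $\supp \mu_2 = \Delta_1/\delta \subset \Sigma_1/\delta = \Sigma_2$.

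The key computation is the transformation law for the logarithmic potential. With $u=\delta s$ one gets
\begin{equation*}
V^{\mu_2}(x) \,=\, -\int \log|x-s|\,\delta\kappa\,\omega_1(\delta s)\,ds \,=\, -\kappa\int \log\frac{|\delta x-u|}{\delta}\,\omega_1(u)\,du \,=\, \kappa V^{\mu_1}(\delta x) + \kappa t_1 \log\delta.
\end{equation*}
Adding $\varphi_2(x)=\kappa\varphi_1(\delta x)$, one obtains
\begin{equation*}
V^{\mu_2}(x)+\varphi_2(x) \,=\, \kappa\bigl(V^{\mu_1}(\delta x)+\varphi_1(\delta x)\bigr) + \kappa t_1 \log\delta.
\end{equation*}

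Now I would read off the equilibrium condition from the one satisfied by $\mu_1$. For $x\in\Delta_1/\delta$ we have $\delta x\in\Delta_1$, so the bracket equals the constant $c_{t_1}$, giving $V^{\mu_2}(x)+\varphi_2(x)=\kappa c_{t_1}+\kappa t_1\log\delta$, which is independent of $x$. For $x\in\Sigma_2$ we have $\delta x\in\Sigma_1$, so the bracket is $\geq c_{t_1}$, and the corresponding inequality for $\mu_2$ follows. Hence $\mu_2$ meets the characterization \eqref{equilibrium}, and by uniqueness it is the equilibrium measure of mass $\kappa t_1$ in $\Sigma_2$ for the field $\varphi_2$. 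There is no real obstacle here; the only thing worth noting is the appearance of the additive term $\kappa t_1\log\delta$, which shifts the equilibrium constant by $c_{t_2}=\kappa c_{t_1}+\kappa t_1\log\delta$ and must be tracked carefully if one later wishes to use $c_t$ explicitly.
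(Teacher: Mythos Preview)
Your proof is correct and follows essentially the same approach as the paper's: both verify the equilibrium condition for $\mu_2$ by a change of variable $u=\delta s$ in the logarithmic potential and check the total mass the same way. Your version is slightly more explicit in separating out the support inclusion, the constancy on $\Delta_1/\delta$, and the inequality on the rest of $\Sigma_2$, and in tracking the additive constant $\kappa t_1\log\delta$, but the underlying argument is identical.
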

\begin{proof} The proof follows straightforward from the identities ($x=\delta y$)
\begin{align*}
\int_{\Delta_1}&\log\frac{1}{|s-x|}\omega_1(s)ds+\varphi_1(x)
=\int_{\Delta_2}\log\frac{1}{|\delta s-\delta y|}\delta \omega_1(\delta s)ds+\varphi_1(\delta y)\\
=&\int_{\Delta_2}\log\frac{1}{|s-y|}\delta \omega_1(\delta s)ds+\varphi_1(\delta y)+\text{const}
=\frac{1}{\kappa}\left(\int_{\Delta_2}\log\frac{1}{|s-y|}\delta \kappa\omega_1(\delta s)ds+\kappa\varphi_1(\delta y)+\text{const}\right)\,,
\end{align*}
  which implies that $\mu_2$, given by its density $\omega_2(x)=\mu'_2(x)= \delta \kappa \omega_1(x)$ in $\Delta_2$, is the equilibrium measure in $\Sigma_2$ in the external field $\varphi_2$, with total mass $t_2$ given by
$$t_1=\int_{\Delta_1}\omega_1(s)ds=\int_{\Delta_2} \delta \omega_1(\delta s)ds=\frac{1}{\kappa} \int_{\Delta_2} \delta \kappa\omega_1(\delta s)ds=\frac{1}{\kappa}\int_{\Delta_2}\omega_2(s)ds=\frac{1}{\kappa}t_2\,.$$

\end{proof}

As a consequence, setting $\delta=v$ and $\kappa=1/v^2$, we can study the equilibrium measure $\nu = \nu_t$ supported in the positive semi-axis in the external field $$\kappa \Psi(\delta x)=\frac{1}{ v^2}\left(\frac{(vx)^2}{2}+2 b (vx)+2 c \log(vx+v)\right)=
\frac{1}{2}x^2+\frac{2 b}{v}x+\frac{2 c}{v^2}\log(x+1)+\frac{2 c}{v^2}\log v\,,$$
or, what is the same, recalling the parameters and removing the constant term, we finally arrive to the external field
\begin{equation}\label{simplified}
\phi(x)=\frac{1}{2}x^2+\beta x+\gamma \log (x+1)\,,\,x\in[0,+\infty)\,.
\end{equation}

Hence, for simplicity, through this subsection, where dynamics with respect to $t$ is considered, we deal with the equilibrium problem in the positive semi-axis in the external field \eqref{simplified}; the conclusions could be readily translated to the original setting in the whole real axis in the external field \eqref{real axis}, taking $\beta = \frac{2b}{v}$ and $\gamma = \frac{2c}{v^2}$.

It is also noteworthy to mention that the external field \eqref{simplified} admits a simple electrostatic interpretation. It consists of a polynomial part plus the potential created by a charge located at $-1$. Coefficient $|\gamma|$ is the mass of this charge, in such a way that $\gamma>0$ means that we treat with an attractive charge, while $\gamma<0$ stands for a repulsive charge. Coefficient $\beta$ is related with whether $0$ is a local maximum (and thus, it tries to repel the support of the equilibrium measure) or a local minimum (so, it attracts the support), namely:  if $\beta+\gamma>0$, then $0$ is a local minimum and if $\beta+\gamma<0$, then $0$ is a local maximum.

Now, for the sake of transparency in our discussion, we start announcing our main results throughout this subsection. These results are written in terms of the equilibrium measure $\nu$ in the presence of the field \eqref{simplified}. In order to do it, let us set, first, the critical points of $\phi$ in \eqref{simplified}, that is,
\begin{equation}\label{criticalp}
y_{\pm} = -\frac{\beta +1}{2}\,\pm \,\sqrt{\left(\frac{\beta -1}{2}\right)^2-\gamma}\,.
\end{equation}
In addition, let us define the following open regions $\mathcal{A},\mathcal{B},\mathcal{C} \subset \R^2$ in the $(\beta,\gamma)\,$-plane, being pairwise disjoint and such that $\overline{\mathcal{A}\cup \mathcal{B}\cup \mathcal{C}}=\R^2\,.$
\begin{equation*}\label{openregions}
\begin{split}
\mathcal{A} = & \left\{(\beta,\gamma)\in R^2\,:\,\beta >-1\;\text{and}\;\gamma >-\beta\;\;\text{or}\;\;\beta <-1\;\text{and}\;\gamma >  \left(\frac{3-2\beta}{5}\right)^{5/2}\right\}\,,\\
\mathcal{B}= & \left\{(\beta,\gamma)\in R^2\,:\,\beta >-1\;\text{and}\;\gamma <-\beta\;\;\text{or}\;\;\beta <-1\;\text{and}\;\gamma < \sqrt{-1-2\beta}\right\}\,,\\
\mathcal{C} = & \left\{(\beta,\gamma)\in \R^2\,:\,\beta < -1\,\text{and} \,  \sqrt{-1-2\beta} < \gamma < \left(\frac{3-2\beta}{5}\right)^{5/2}\right\}\,.
\end{split}
\end{equation*}

For our purposes it is also important to know whether the absolute minimum of $\phi$ in $[0,+\infty)$ is $y_+$ or the origin. In this sense, take into account that $2\phi(y_+) = (\beta -1)y_+ +2 \gamma \log (y_++1)-\beta -\gamma\,,$ and we have that equation $\phi(y_+) = 0$ defines a curve in the $(\beta ,\gamma)$ plane passing through the point $(-1,1)$ and such that for $\beta <-1$, this is a monotonically decreasing curve placed between the bisector $\beta + \gamma = 0$ and the left branch of the parabola $\displaystyle \gamma = \left(\frac{\beta +1}{2}\right)^2$, that is, $\beta = 1-2\sqrt{\gamma}\,.$ It is easy to check that on the right and left-hand sides of this curve, it holds $\phi(y_+) > 0$ and $\phi(y_+) < 0$, respectively (below, these subregions will be denoted by $\mathcal{C}_+$ and $\mathcal{C}_-$, respectively, see Figure 1).

Now, we have
\begin{theorem}\label{thm:two-cut}
For the regions of the $(\beta,\gamma)\,$-plane defined above, we have

\begin{itemize}

\item [(a)] For $(\beta,\gamma)\in \mathcal{A}\,,$ $S_t = [0,a_1]\,,$ for any $t>0$, $a_1=a_1(t)$ being an increasing function of $t$.

\item [(b)] For $(\beta,\gamma)\in \mathcal{B}$, there exists $T_0=T_0(\beta,\gamma)>0$ such that:
	\begin{itemize}
	\item [(i)] For $t<T_0$, $S_t=[a_2,a_3]$ with $a_2=a_2(t)$ a decreasing function and $a_3=a_3(t)$ an increasing function. Furthermore, 	 $\lim_{t\nearrow T_0}a_2=0$.
	\item [(ii)] For $t\geq T_0$, $S_t=[0,a_1]$ with $a_1=a_1(t)$ an increasing function.
	\end{itemize}
      In addition, we have that $\lim_{t\nearrow T_0}a_3=\lim_{t\searrow T_0}a_1$.

\item [(c)] For $(\beta,\gamma)\in \mathcal{C}$, there exists $0\leq T_1=T_1(\beta,\gamma)<T_2=T_2(\beta,\gamma)<+\infty$ such that
	\begin{itemize}
	\item [(i)] For $t<T_1$, $S_t$ consists of one interval:
		\begin{itemize}
		\item If $(\beta,\gamma)\in \mathcal{C}_+$, $S_t=[0,a_1]$, with $a_1=a_1(t)$ an
            increasing function;
		\item If $(\beta,\gamma)\in {C}_-$, $S_t=[a_2,a_3]$, with $a_2=a_2(t)$ a
        decreasing function and $a_3=a_3(t)$ an increasing function. Furthermore, $\lim_{t\nearrow T_1}a_2>0$.
		\end{itemize}
    \item [(ii)] For $t\in(T_1,T_2)$, $S_t=[0,a_1]\cup [a_2,a_3]$ with $a_1=a_1(t)$ and $a_3=a_3(t)$ increasing functions while $a_2=a_2(t)$ is a decreasing function. Furthermore
        \begin{itemize}
            \item If $(\beta,\gamma)\in \mathcal{C}_+$ then $\lim_{t\nearrow T_1}a_1<\lim_{t\searrow T_1}a_2=\lim_{t\searrow T_1}a_3$.
            \item If $(\beta,\gamma)\in \mathcal{C}_-$ then $\lim_{t\searrow T_1}a_1=0$, $\lim_{t\nearrow T_1}a_2=\lim_{t\searrow T_1}a_2$ and
                $\lim_{t\nearrow T_1}a_3 =\lim_{t\searrow T_1}a_3$.
        \end{itemize}
        In addition, $\lim_{t\nearrow T_2}a_1=\lim_{t\nearrow T_2}a_2$.
    \item [(iii)] For $t>T_2$, $S_t=[0,a_1]\,,$ $a_1=a_1(t)$ being an increasing function. Moreover, $\lim_{t\nearrow T_2}a_3=\lim_{t\searrow T_2}a_1$.
    \end{itemize}

Note that $T_1=0$ if and only if the particular case where $\beta < -1$ and $(\beta,\gamma)$ belongs to the curve where $\phi(y_+)=0$ takes place.

\end{itemize}

\end{theorem}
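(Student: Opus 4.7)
The plan is to trace the evolution of $S_t=\supp\nu_t$ region by region by combining three ingredients: the algebraic representation \eqref{Cauchytr} (Theorem 1.1), the dynamical system \eqref{variationzeros} (Theorem 1.2), and asymptotic analysis at $t\to 0^+$ and $t\to\infty$. For the field \eqref{simplified} one has $P(x)=x^2/2+\beta x$ and a single charge at $-1$ (so $p=q=1$ and $D(x)=x+1$), hence $k\in\{1,2\}$. Combined with the hard edge that activates whenever the left endpoint reaches $0$, only the three geometric configurations $[0,a_1]$, $[a_2,a_3]$, and $[0,a_1]\cup[a_2,a_3]$ are possible, and the regions $\mathcal{A}$, $\mathcal{B}$, $\mathcal{C}$ will be shown to correspond respectively to $0$, $1$, and $2$ topological transitions of $S_t$ as $t$ increases from $0$ to $\infty$.

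First I analyze $\phi'(x)=x+\beta+\gamma/(x+1)$ on $[0,\infty)$: the critical points are $y_\pm$ from \eqref{criticalp}, the sign of $\phi'(0)=\beta+\gamma$ decides whether the origin is a local minimum of $\phi|_{[0,\infty)}$, and the curve $\phi(y_+)=0$ separates $\mathcal{C}_+$ from $\mathcal{C}_-$ according to whether the absolute minimum of $\phi$ on $[0,\infty)$ is attained at $0$ or at $y_+$. This identifies the initial support as $t\to 0^+$: a shrinking $[0,a_1]$ with a hard edge if the absolute minimum is $0$, or a shrinking $[a_2,a_3]$ around $y_+$ otherwise. At the other extreme, the polynomial part of $\phi$ dominates for $t\to\infty$ and, by comparison with the purely polynomial case \cite{MOR2013}, one obtains $S_t=[0,a_1(t)]$ with $a_1(t)\to\infty$, so every trajectory must terminate in this configuration. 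Between the two limits the support depends analytically on $t$ except at a discrete set of singularities of types I, II, III or at the instant when the left soft edge hits the hard edge; monotonicity $\dot a_1,\dot a_3>0$ and $\dot a_2<0$ follows from \eqref{variationzeros} on using that $D(x)>0$ on the support and that $F$ is monic of degree $k-1$ with exactly one zero on each gap. With this machinery I then trace each case: in $\mathcal{A}$ no transition occurs and $S_t=[0,a_1(t)]$ for all $t$; in $\mathcal{B}$ the initial two-cut $[a_2,a_3]$ persists until $a_2$ reaches $0$ at the unique singular time $T_0$, after which $S_t=[0,a_1(t)]$ with $a_1(T_0^+)=a_3(T_0^-)$; in $\mathcal{C}$ a Type I singularity at $T_1$ creates a second cut (either inside $(a_1,\infty)$ when $(\beta,\gamma)\in\mathcal{C}_+$, or at the hard edge $\{0\}$ when $(\beta,\gamma)\in\mathcal{C}_-$) and a Type II fusion at $T_2$ reunifies the two cuts, with the degenerate case $\phi(y_+)=0$, $\beta<-1$ producing $T_1=0$.

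The main obstacle is pinning down the boundary curves of the regions in the $(\beta,\gamma)$-plane, specifically $\gamma=\sqrt{-1-2\beta}$ and $\gamma=((3-2\beta)/5)^{5/2}$ for $\beta<-1$. These arise as degeneracy conditions: the former from the parameter value at which the initial two-cut configuration of $\mathcal{B}$ exactly collapses to $0$ as $t\searrow 0$ rather than persisting to a later positive $T_0$, and the latter from the threshold at which the Type I birth of a cut in $\mathcal{C}$ degenerates, the exponent $5/2$ reflecting the Type III-like $|x-a|^{5/2}$ behavior of the density at the singular instant. Establishing these curves requires solving the algebraic system derived from \eqref{Cauchytr} together with the period conditions \eqref{periods} at the critical $t$-values, and ruling out further transitions using the monotonicity and limiting analysis above; Theorem 1.3 is then invoked to propagate the analysis across parameter space and verify that no additional region appears between the three listed.
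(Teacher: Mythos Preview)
Your overall strategy matches the paper's: start from the minimum of $\phi$ on $[0,\infty)$ to fix the initial configuration, run the dynamical system \eqref{variationzeros} to get monotonicity of the endpoints, and watch for Type I/II/III singularities. Where your proposal diverges from the actual argument, and where it has real gaps, is in the identification of the two boundary curves for $\beta<-1$.

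For $\gamma=\sqrt{-1-2\beta}$ your description is incorrect. This curve is \emph{not} determined by any $t\searrow 0$ collapse. In the relevant sub-region (the paper's region II, $\gamma>1$, $\beta<-\gamma$) one has $y_-\in(-1,0)$ and $y_+>0$, so initially $S_t=[a_2,a_3]$ with $b_1(0)=y_-<0$. The dynamics give $\dot b_1>0$ and $\dot a_2<0$, so $b_1$ and $a_2$ both move toward the origin. The curve $\gamma=\sqrt{-1-2\beta}$ is precisely the \emph{tie} in this race, producing a triple root of $R$ at $x=0$ at a specific finite time $T>0$; solving $2(\beta+1)=-a_3$, $(\beta+1)^2+2(\gamma+\beta-T)=0$, $\gamma=\sqrt{1+a_3}$ gives the curve. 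If $a_2$ wins you are in $\mathcal{B}$; if $b_1$ wins a new cut must be born at the origin and you are in $\mathcal{C}_-$.

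For $\gamma=((3-2\beta)/5)^{5/2}$ your intuition about the $5/2$ exponent is right, but you are missing the mechanism. In the sub-region $1-2\sqrt{\gamma}<\beta<-1$ the critical points $y_\pm$ are a complex conjugate pair, so one starts in scenario (a2) with $b_2=\overline{b_1}\notin\R$. The key tool (absent from your sketch) is the disk $D_t=\{|z-(a_1-1)/2|\le (a_1+1)/2\}$: from \eqref{a2} one sees $\Im b_1$ increases when $b_1\in D_t$ and decreases otherwise. The boundary curve is exactly the Type III collision $b_1=\overline{b_1}=a_1$, and equating coefficients in \eqref{Rsimplified} at that instant gives $2(\beta+1)=-5a_1$, $\gamma=(1+a_1)^{5/2}$, hence $\gamma=((3-2\beta)/5)^{5/2}$. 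Above this curve $b_1$ enters $D_t$ before reaching the real axis and the one-cut phase persists ($\mathcal{A}$); below it $b_1,\overline{b_1}$ land on $(a_1,\infty)$ as a real double root which later splits, triggering a Type I birth ($\mathcal{C}_+$).

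Finally, Theorem~\ref{parameters} plays no role here; the paper does not propagate across parameter space but rather exhausts the finitely many sub-regions determined by the location of $y_\pm$ (real positive, real negative, or complex) and by the sign of $\phi(y_+)$. Your proposal also omits the argument that once $S_t=[0,a_1]$ with both $b_1,b_2$ to the left of $0$ or complex, no further transition occurs; the paper uses the invariant $a_1+2(b_1+b_2)=-2(\beta+1)$ together with the disk $D_t$ to rule out late births.
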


\begin{remark}\label{ambiguity}

As it will be seen below, for the support of the equilibrium measure we have three possible scenarios, in such a way that the endpoints will be denoted according these different settings. Namely, we have

\begin{itemize}

\item If the support consists of a single interval containing the origin, then we call $a_1$ the rightmost endpoint: $S_t = [0,a_1]$.

\item If the support consists of a single interval not containing the origin, then we write: $S_t = [a_2,a_3]$.

\item Finally, If the support comprises two intervals, then necessarily an endpoint is the origin and we write $S_t = [0,a_1]\cup [a_2,a_3]$.

\end{itemize}

On the other hand, the other zeros of the density of the equilibrium measure are listed according its distance to the support (there are two of these just when the support is a single interval containing the origin). As we can see in the statement of Theorem \ref{thm:two-cut}, in certain cases this notation may seem to fall in a certain ambiguity, for instance, when two disjoint intervals merger producing a single one; nevertheless, we guess that this is a common problem to all possible conventions.

From now on, if it is not necessary, no mention of the dependence of $a_i$ and $b_j$ on the total mass $t$ will be made.

\end{remark}

\begin{remark}\label{rem:transitions}
Critical values $T_j\,,\,j=0,1,2\,,$ in Theorem \ref{thm:two-cut} can be obtained explicitly using the fact that they correspond with phase transitions, as we will show below.

On the other hand, along the upper curve delimiting $\mathcal{C}_+$ and the lower one delimiting $\mathcal{C}_-$, in Figure 1, a type III transition takes place. The other boundary in Figure 1, namely, the bisector $\beta + \gamma = 0\,,\, \beta > -1\,,$ splits the region where we have one-cut for any $t>0$ into two parts: on $\mathcal{A}$, the support $S_t$ always has the origin as an endpoint, while on $\mathcal{B}$, $S_t$ does not contain initially the origin.
\end{remark}

\begin{figure}
    \begin{center}
        \includegraphics[scale=0.3]{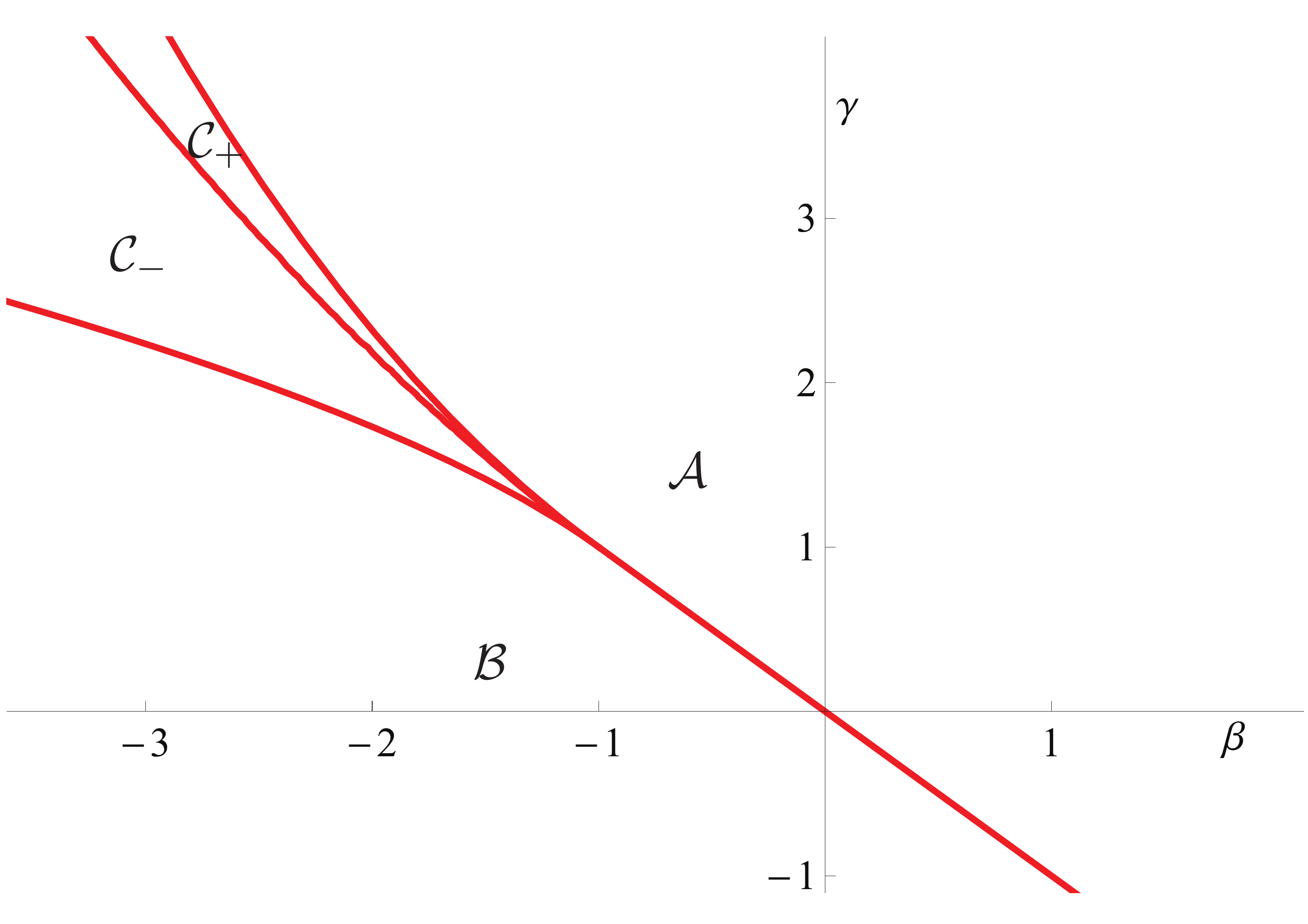}
        \caption{Different regions on $(\beta,\gamma)$-plane according to the phases of $S_t$.}
        \label{fig:mapaFases}
    \end{center}
\end{figure}

The next result deals with what happens when $t\rightarrow \infty$, that is, what we can call, roughly speaking, the ``end of the movie''. By \cite{Buyarov/Rakhmanov:99}, it is clear what is the beginning:  we know that $$\bigcap_{t>0}\,S_t = \{y\in [0,+\infty):\phi(y) = \min_{x\geq 0}\,\phi(x)\} \in \{0,\,y_+\}\,,$$
and, in fact, the set above consists of a single point except for $(\beta, \gamma)$ in the curve where $\phi(y_+)=\phi(0)=0\,.$
Now, we want to know what occurs in any case when $t$ is large enough. It will be also proved that, though there are different possible evolutions, the ``end of movie'' is very similar in all cases.

\begin{theorem}\label{thm:end}
For any $\beta \in \R$ and $\gamma \in \R\setminus \{0\}$, there exists $T = T(\beta,\gamma)>0$ such that for $t>T$, the density of the equilibrium measure has the form:
$$\nu'_t(x) = \frac{(x-b_1)(x-b_2)}{x+1}\,\sqrt{\frac{a_1-x}{x}}\,dx\,,$$ where $b_2<b_1\,,$ with $b_2 \in (-\infty,-1)\,,\,b_1 \in (-\infty,0)$, for $t>0$ and $$\lim_{t\rightarrow \infty}\,a_1(t) = +\infty\,,\;\lim_{t\rightarrow \infty}\,b_2(t) = -\infty\,,\;\lim_{t\rightarrow \infty}\,b_1(t) = -1\,.$$
\end{theorem}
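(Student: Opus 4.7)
The plan is to combine the one-cut structure provided by Theorem \ref{thm:two-cut} with the algebraic representation of the Cauchy transform, and then read off the asymptotics of $a_1,b_1,b_2$ by matching $g(z):=\phi'(z)-\widehat{\nu_t}(z)$ at infinity and at the pole $z=-1$. By Theorem \ref{thm:two-cut}, parts (a), (b)(ii) and (c)(iii), there is a threshold $T=T(\beta,\gamma)>0$ such that for every $t>T$ the support collapses to a single interval $[0,a_1(t)]$ with the origin as a hard left endpoint, so we work from here on under $t>T$. The function $g$ satisfies $g_++g_-=0$ on $(0,a_1)$, hence $g^2$ extends meromorphically to $\C$. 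The only singularity of $g$ outside $S_t$ is a simple pole at $z=-1$ with residue $\gamma$ (from $\phi'$); the soft edge at $a_1$ contributes a simple zero of $g^2$; the hard edge at $0$ contributes a simple pole of $g^2$ (from the $x^{-1/2}$ blow-up of $\nu_t'$ there); and $g^2(z)\sim z^2$ at infinity since $g(z)=z+\beta+(\gamma-t)/z+O(z^{-2})$. The only meromorphic function with this divisor structure is
\begin{equation*}
g(z)=\frac{(z-b_1)(z-b_2)}{z+1}\,\sqrt{\frac{z-a_1}{z}}\,,
\end{equation*}
with the branch of the square root positive for real $z>a_1$; continuation through either half-plane gives $\sqrt{(z-a_1)/z}\,|_{z=-1}=\sqrt{1+a_1}>0$.

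The three unknowns $a_1,b_1,b_2$ are determined by three scalar conditions. Expanding $\sqrt{(z-a_1)/z}=1-\tfrac{a_1}{2z}-\tfrac{a_1^2}{8z^2}+\cdots$ and matching the $z^0$ and $z^{-1}$ coefficients of $g$ at infinity yields
\begin{equation*}
b_1+b_2=-1-\tfrac{a_1}{2}-\beta,\qquad b_1 b_2=\gamma-t+\tfrac{3a_1^2}{8}+\tfrac{\beta+1}{2}\,a_1+\beta\,.
\end{equation*}
Equating the residue of $g$ at $z=-1$ to $\gamma$ gives the third equation $(1+b_1)(1+b_2)\sqrt{1+a_1}=\gamma$, which, after substituting the expressions above for $b_1+b_2$ and $b_1 b_2$, is equivalent to
\begin{equation*}
\Bigl(\tfrac{3a_1^2}{8}+\tfrac{\beta a_1}{2}+\gamma-t\Bigr)\sqrt{1+a_1}=\gamma\,.
\end{equation*}

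For the limit $t\to\infty$, the left-hand factor in the last equation must tend to $0$ (since the right-hand side is a fixed constant and $\sqrt{1+a_1}$ is positive), forcing $a_1\to+\infty$ with $a_1\sim\sqrt{8t/3}$. Plugging back yields $b_1+b_2\sim -a_1/2$ and $b_1b_2\sim a_1/2+\beta$; the discriminant simplifies to $(a_1/2+\beta-1)^2-4\gamma/\sqrt{1+a_1}$, and the roots of the quadratic split as
\begin{equation*}
b_1=-1-\frac{2\gamma}{a_1\sqrt{1+a_1}}+O\!\left(a_1^{-3}\right)\longrightarrow -1,\qquad b_2=-\tfrac{a_1}{2}-\beta+O\!\left(a_1^{-1}\right)\longrightarrow -\infty\,.
\end{equation*}
In particular $b_2<-1<b_1<0$ for all sufficiently large $t$, and $(x-b_1)(x-b_2)>0$ on $(0,a_1)$ since both $b_j$ are negative, giving the required positivity of the density. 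I expect the main obstacle to be the divisor bookkeeping for $g^2$ in the preceding paragraph: Theorem \ref{Rrational} is stated for external fields on all of $\R$ with complex-conjugate charges, so one must re-derive the precise shape of $\sqrt{R}$ directly in order to accommodate the semi-axis conductor, the real pole of $\phi'$ at $-1$, and the hard edge at $0$.
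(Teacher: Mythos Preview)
Your argument is correct and reaches the conclusion, but it proceeds quite differently from the paper's own proof. The paper argues via the dynamical system \eqref{a1}--\eqref{a2}: once Theorem \ref{thm:two-cut} guarantees $S_t=[0,a_1]$ for $t$ large, it tracks $b_1,b_2$ by the sign pattern of $\dot b_j$ through the four real sub-cases of scenario (a1) and the complex scenario (a2), using the disk $D_t$ in \eqref{circleDt} to force any complex pair $b_1,\overline{b_1}$ eventually to collide on $(-\infty,-1)$ and then separate along the real axis with $b_1\to-1$, $b_2\to-\infty$. Your approach bypasses all of this ODE/case analysis: you write down the three scalar equations coming from the $z^0$, $z^{-1}$ terms at infinity and the residue at $-1$, reduce them to the single equation $\bigl(\tfrac{3a_1^2}{8}+\tfrac{\beta a_1}{2}+\gamma-t\bigr)\sqrt{1+a_1}=\gamma$, and read off the asymptotics directly. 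This is more economical for the limiting statement and gives the quantitative rate $a_1\sim\sqrt{8t/3}$, which the paper's argument does not. Conversely, the paper's dynamical picture explains the full trajectory of $(b_1,b_2)$ for all $t>T^*$, including the possible passage through a complex-conjugate phase, which your static approach cannot see.

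Two small points. First, your sentence ``the left-hand factor must tend to $0$ \ldots\ forcing $a_1\to+\infty$'' is ordered backwards: you should first rule out bounded $a_1$ (if $a_1$ stayed bounded along a subsequence, the bracket would go to $-\infty$ while $\sqrt{1+a_1}$ stays positive and bounded, contradicting the fixed right-hand side), and only then use $\sqrt{1+a_1}\to\infty$ to force the bracket to $0$. Second, your final claim ``$b_2<-1<b_1<0$'' is not quite right: your own expansion gives $b_1=-1-\tfrac{2\gamma}{a_1\sqrt{1+a_1}}+\cdots$, so $b_1<-1$ when $\gamma>0$ and $b_1>-1$ when $\gamma<0$. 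This does not affect the theorem, which only asserts $b_1\in(-\infty,0)$ and $b_2\in(-\infty,-1)$; just drop the strict ``$-1<b_1$''. Your worry about the divisor bookkeeping is already taken care of in the paper by \eqref{Rsimplified}, which derives the shape of $R_t$ on the semi-axis (with the $1/\sqrt{x}$ hard-edge factor and the pole at $-1$) from Theorem \ref{Rrational} via \eqref{reldensities} and Lemma \ref{lemma:simplif}.
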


Note that both in regions $\mathcal{A}$ and $\mathcal{B}$ above, we have one-cut for any $t>0$. However, since our primary aim is studying the evolution of $\lambda_t$ in terms of the external field \eqref{real axis}, we also need to know when $S_t$ consists of a single interval with the origin as an endpoint for any $t>0$. In this sense, as an immediate consequence of Theorems \ref{thm:two-cut}-\ref{thm:end}, the situation in the original setting, where the equilibrium problem in the whole real axis in the external field \eqref{quarticperturblog} is considered, may be depicted (see Figure 1 for the different scenarios). Recall that $\beta = \frac{2b}{v}$ and $\gamma = \frac{2c}{v^2}$.

\begin{corollary}\label{cor:phasediag}
We have the following ``phase-diagrams'' (that is, the evolution of the numbers of cuts when $t$ grows) for $\supp \lambda_t\,,$ the equilibrium measure in the presence of the external field \eqref{real axis} in the whole real axis:

\begin{itemize}

\item For $(\beta,\gamma) \in \mathcal{A}\,$: \textbf{one-cut}, for any $t>0$.
\item For $(\beta,\gamma) \in \mathcal{B}\,$: \textbf{two-cut}, for $\,0<t<T_0\;$  $\,\longrightarrow\,$ \textbf{one-cut}, for $\,t\geq T_0$.
\item For $(\beta,\gamma) \in \mathcal{C}_+\,$: \textbf{one-cut}, for $\,0<t\leq T_1\;$ $\,\longrightarrow\,$ \textbf{three-cut}, for $\,T_1<t<T_2\;$ $\,\longrightarrow\,$ \textbf{one-cut}, for $\,t\geq T_2$.
\item For $(\beta,\gamma) \in \mathcal{C}_-\,$: \textbf{two-cut}, for $\,0<t\leq T_1\;$ $\,\longrightarrow\,$ \textbf{three-cut}, for $\,T_1<t<T_2\;$ $\,\longrightarrow\,$ \textbf{one-cut}, for $\,t\geq T_2$.

\end{itemize}

\end{corollary}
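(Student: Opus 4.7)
The plan is to translate the detailed half-line picture of Theorems \ref{thm:two-cut} and \ref{thm:end} (for the rescaled measure $\nu_\tau$ in the field \eqref{simplified}) back to the full-line setting of the original field \eqref{real axis}. I would carry this out in three short steps.

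First, I would invoke the even symmetry of $\varphi$ and the change of variable $y \mapsto \sqrt y$ (see the discussion around \eqref{reldensities}) to identify the equilibrium measure $\lambda_t$ on $\R$ with a measure $\mu_t$ of the same total mass $t$ supported on $[0,+\infty)$ in the external field $\Psi$ of \eqref{semiaxis}. The rescaling of Lemma \ref{lemma:simplif} with $\delta=v$ and $\kappa=1/v^2$ then identifies $\mu_t$ with $\nu_{t/v^2}$ up to the affine change $x\mapsto vx$, so the number of connected components of $\supp\mu_t$ and whether the origin is an endpoint coincide with those of $\supp\nu_{t/v^2}$.

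Second, I would quote Theorem \ref{thm:two-cut} to read off, for each of the four parameter regions $\mathcal{A}$, $\mathcal{B}$, $\mathcal{C}_+$, $\mathcal{C}_-$, the sequence of supports of $\nu_\tau$ as $\tau$ grows from $0$, and Theorem \ref{thm:end} to guarantee that for all sufficiently large $\tau$ the support is the single interval $[0,a_1(\tau)]$ containing the origin. This yields the four phase sequences $[0,a_1]$; $[a_2,a_3]\to[0,a_1]$; $[0,a_1]\to[0,a_1]\cup[a_2,a_3]\to[0,a_1]$; and $[a_2,a_3]\to[0,a_1]\cup[a_2,a_3]\to[0,a_1]$, with transitions at the critical times prescribed there.

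Third, I would apply the ``square-root dictionary'' $y\mapsto y^2$ to pass back to $\R$: a support $[0,a_1]$ of $\nu_\tau$ lifts to the single symmetric interval $[-\sqrt{a_1},\sqrt{a_1}]$ (one cut); a support $[a_2,a_3]$ with $a_2>0$ lifts to the symmetric pair $[-\sqrt{a_3},-\sqrt{a_2}]\cup[\sqrt{a_2},\sqrt{a_3}]$ (two cuts); and $[0,a_1]\cup[a_2,a_3]$ with $0<a_1<a_2$ lifts to the three symmetric intervals $[-\sqrt{a_3},-\sqrt{a_2}]\cup[-\sqrt{a_1},\sqrt{a_1}]\cup[\sqrt{a_2},\sqrt{a_3}]$ (three cuts). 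Combining this with the phase sequences above gives precisely the four phase diagrams claimed in the corollary, with the transitions in the original variable occurring at $t=v^2 T_j$ evaluated at $\beta=2b/v$, $\gamma=2c/v^2$. No step here is genuinely hard---as the text says, the corollary is an ``immediate consequence'' of Theorems \ref{thm:two-cut} and \ref{thm:end}; the only care required is purely the bookkeeping of the square-root map, where what matters is whether the left endpoint of a half-line support equals $0$, since it is this feature that distinguishes the symmetric one-cut phase of $\lambda_t$ from the symmetric two-cut one.
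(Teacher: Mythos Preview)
Your proposal is correct and matches the paper's own treatment: the paper gives no separate proof for this corollary, simply stating that it is ``an immediate consequence of Theorems \ref{thm:two-cut}--\ref{thm:end}'', and you have spelled out precisely the bookkeeping (the $x\mapsto x^2$ reduction to the half-line, the rescaling of Lemma \ref{lemma:simplif}, and the lift back) that this phrase encodes. Two very minor remarks: Theorem \ref{thm:two-cut} already covers all $t>0$ in each region, so invoking Theorem \ref{thm:end} is not strictly necessary for the cut-counting; and your phrase ``square-root dictionary $y\mapsto y^2$'' mixes up the map with its inverse, though the intent is clear.
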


\begin{remark}\label{rem:Krish}
At this point, it is interesting to compare our results with the analysis carried out in \cite{Kris2006}. There, the author studies for which values of parameter $v$ (the ``coupling'' constant, using his terminology) the (probabilistic) equilibrium measure (in the whole real axis) in the external field $\varphi(x) = \frac{1}{2}\,x^4 - \frac{1}{2}\,\log (x^2+v)\,$ has a one-cut or a two-cut support. He obtains a critical value for parameter $v$, namely, $v_c \approx 0.27$, such that for $v < v_c$, the support of the equilibrium measure is comprised of two disjoint intervals. Relations \eqref{real axis}-\eqref{reldensities} and Lemma \ref{lemma:simplif} imply that this problem is equivalent to set $\beta = 0$, $\gamma = -\,\frac{1}{2v^2}$ and $t = \frac{1}{2v^2}$ in our scenario (b) above, determining the critical value $v_c$ of $v$ for which $a_2 = 0$. Straightforward calculus yield the unique admissible solution $v_c = 0.269593\ldots$  which agrees with the approximate value $v_c \approx 0.27\,,$ obtained in \cite{Kris2006}.

\end{remark}

The rest of this subsection will be devoted to the proof of these main results and other complementary ones. As announced, the main ingredients for this study are the representation of the Cauchy transform of $\nu_t$, derived from Theorem 1.1 above, and the dynamical system \eqref{variationzeros} contained in Theorem 1.2.

With respect to the first of these tools, from Theorem 1.1, relations \eqref{semiaxis}-\eqref{reldensities} and Lemma \ref{lemma:simplif}, we have that for the equilibrium measure $\nu_t$ in $[0,+\infty)$ in the presence of the external field \eqref {simplified} the following identity holds:
\begin{equation}\label{Rsimplified}
(-\widehat{\nu_t} + \phi')^2(z) = R_t(z) = \frac{A(z)B(z)^2}{q(z)(z+1)^2}\,,
\end{equation}
where the zeros of $A(z)$, that is, the zeros of odd order of $R_t$, are the endpoints of the support $S_t$ not being the origin; and $q(z) = z$ or $q(z) = 1$, according to whether the origin is or is not an endpoint of the support, respectively, and, in case of being and endpoint, depending on its nature (hard or soft-edge, as it will be explained below). We also have that $deg (A) \in \{1,2,3\}$, according to the support consists of a single interval with the origin as an endpoint, a single interval with strictly positive endpoints or two disjoint intervals (necessarily, with the origin as an endpoint), respectively. In addition, $deg (A) + 2\, deg (B) - deg (q) = 4$ and the zeros of $B$ are the other zeros of the density of $\nu'$ not being endpoints of $S_t$.

Now, from \eqref{totalpotential} and \eqref{Rsimplified} we have that the possible expressions for the density of the equilibrium measure $\nu_t$ in the external field \eqref{simplified}, except for the critical values of $t$ where phase transitions take place, are the following:

\begin{itemize}

\item \textbf{(a) one-cut with the origin as an endpoint}
\begin{equation}\label{scen(a)}
\nu'_t(x) = \frac{1}{\pi }\,\frac{(x-b_1)(x-b_2)}{x+1}\,\sqrt{\frac{a_1-x}{x}}\,,\;x\in (0,a_1)\,,\:0<a_1=a_1(t)\,,
\end{equation}
with

\textbf{(a1)} $a_1=a_1(t)<b_1=b_1(t)<b_2=b_2(t)$ and $$\int_{a_1}^{b_2}\,\frac{(x-b_1)(x-b_2)}{x+1}\,\sqrt{\frac{x-a_1}{x}}\,dx \,>\,0\,;$$ or $b_2=b_2(t)<b_1=b_1(t)<0<a_1=a_1(t)$.

or

\textbf{(a2)} $b_1,b_2 \notin \R\,,\; b_2 = \overline{b_1}.$

\item \textbf{(b) one-cut without the origin as an endpoint}
\begin{equation*}\label{scen(b)}
\nu'_t(x) = \frac{1}{\pi }\,\frac{x-b_1}{x+1}\,\sqrt{(x-a_2)(a_3-x)}\,,\;x\in (a_2,a_3)\,,\;b_1=b_1(t)<a_2=a_2(t)<a_3=a_3(t)\,.
\end{equation*}
 When $0<b_1<a_2$, we have, in addition, that $$\int_0^{a_2}\,\frac{x-b_1}{x+1}\,\sqrt{(x-a_2)(x-a_3)}\,dx\,>\,0\,.$$

\item \textbf{(c) two-cut, necessarily with the origin as an endpoint}
\begin{equation}\label{scen(c)}
\nu'_t(x) = \frac{1}{\pi }\,\frac{x-b_1}{x+1}\,\sqrt{\frac{(x-a_1)(x-a_2)(a_3-x)}{x}}\,,\;x\in (0,a_1) \cup (a_2,a_3)\,,
\end{equation}
with $0<a_1=a_1(t)<b_1=b_1(t)<a_2=a_2(t)<a_3=a_3(t)\,,$ where
\begin{equation}\label{eqcond}
\int_{a_1}^{a_2}\,\frac{x-b_1}{x+1}\,\sqrt{\frac{(x-a_1)(x-a_2)(x-a_3)}{x}}\,dx\,=\,0\,.
\end{equation}
\end{itemize}

For the study of the possible scenarios above, identity \eqref{Rsimplified} provides some interesting relations for the zeros of the density of the equilibrium measure in terms of parameters $\beta, \gamma$ and the total mass $t$. Thus, equating coefficients in \eqref{Rsimplified} and taking into account the residues of $\sqrt{R_t}$ at $z=-1$, we obtain different nonlinear systems of equations for the different scenarios.

It is also easy to check the location of critical points $y_{\pm}$, introduced in \eqref{criticalp}, for the different values of $(\beta,\gamma)$. The importance of these points for our analysis relies on the fact that they provide the initial conditions for dynamical systems \eqref{variationzeros} in Theorem 1.2. Indeed, we have
\begin{equation}\label{complexy}
y_- = \overline{y_+}\in \C \setminus \R \Longleftrightarrow \gamma >0\;\text{and}\;1-2\sqrt{\gamma}<\beta <1+2\sqrt{\gamma}\,,
\end{equation}
\begin{equation}\label{locy+}
y_+ \begin{cases} > 0\,,\; & \text{if}\;\gamma >1 \;\text{and}\;\;\beta <1-2\sqrt{\gamma}\;\;\text{or}\;\gamma <1\;\text{and}\;\beta <-\gamma\,,\\
< -1\,,\; & \text{if}\;\gamma >0\; \text{and}\;\beta >1+2\sqrt{\gamma}\,,\\
\in (-1,0)\,,\; & \text{otherwise}\,, \end{cases}
\end{equation}
\begin{equation}\label{locy-}
y_- \begin{cases} > 0\,,\; & \text{if}\;\gamma >1 \;\text{and}\;-\gamma <\beta <1-2\sqrt{\gamma}\,,\\
< -1\,,\; & \text{if}\;\gamma <0 \;\;\text{or}\;\gamma >0\;\text{and}\;\beta >1+2\sqrt{\gamma}\,, \\
\in (-1,0)\,,\; & \text{otherwise}\,. \end{cases}
\end{equation}
In Fig. \ref{fig:mapays} the above possibilities for the location of $y_{\pm}$ are displayed. Namely, from \eqref{complexy}-\eqref{locy-} we have that $y_+ >0$ in regions I, II and III, $y_+ \in (-1,0)$ in IV and V, and $y_+ \in (-\infty,-1)$ in VI; while $y_- >0$ only in region I, $y_- \in (-1,0)$ in II and IV and $y_- <-1$ in III, V and VI. Finally, both $y_{\pm}$ are imaginary, with $y_- = \overline{y_+}\,,$ in region VII.
\begin{figure}
    \begin{center}
        \includegraphics[scale=0.8]{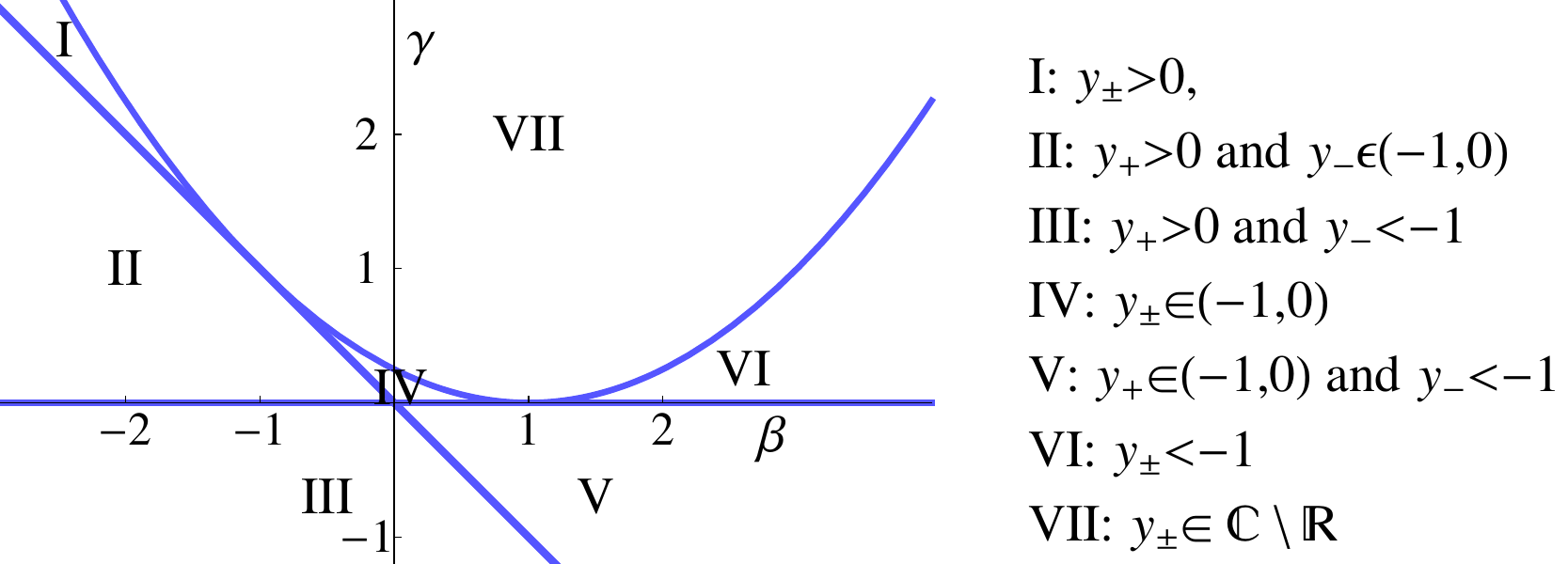}
        \caption{Location of $y_{\pm}$. Different regions on $(\beta,\gamma)$-plane.}
        \label{fig:mapays}
    \end{center}
\end{figure}

On the other hand, when $y_->0$, that is, when $\gamma >1 \;\text{and}\;-\gamma <\beta <1-2\sqrt{\gamma}\,,$ it is also interesting to know whether $\phi(y_+)>\phi(0)=0$, in order to know where $\phi$ attains its absolute minimum in $[0,+\infty)$. This question is solved in terms of the curve in the $(\beta,\gamma)\,$-plane where $\phi(y_+)=0$ described above, which splits region I into two parts and, consequently, region $\mathcal{C}$ into two parts $\mathcal{C}_{\pm}$.

It is well-known that the convexity of the external field guarantees the convexity of the support (see e.g. \cite{Saff:97}), and then, we immediately get
\begin{lemma}\label{lem:convex}
If $\gamma \leq 1$, $S_t = \supp \nu_t$ consists of a single interval for any $t>0\,.$
\end{lemma}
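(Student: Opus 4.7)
The plan is to reduce the statement to the classical fact that a convex external field forces the support of its equilibrium measure to be an interval, which is cited in the paper as a consequence of results in \cite{Saff:97}. Since the conductor is $\Sigma=[0,+\infty)$ (a single interval), convexity of $\phi$ on $\Sigma$ will immediately imply that $S_t$, being an intersection of $\Sigma$ with a convex set in $\R$, is itself an interval.

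First, I would compute the first two derivatives of the external field $\phi$ given in \eqref{simplified}:
\begin{equation*}
\phi'(x) = x + \beta + \frac{\gamma}{x+1}, \qquad \phi''(x) = 1 - \frac{\gamma}{(x+1)^2}.
\end{equation*}
On the conductor $[0,+\infty)$ one has $(x+1)^2 \geq 1$, so $\gamma/(x+1)^2 \leq \gamma \leq 1$ whenever $\gamma \leq 1$. Hence $\phi''(x) \geq 0$ for every $x \geq 0$, which means that $\phi$ is convex on $[0,+\infty)$.

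Next, I would invoke the convexity criterion for the support of the equilibrium measure (see \cite[Theorem IV.1.10]{Saff:97} or the discussion after it): when both the conductor $\Sigma$ and the external field $\varphi$ are convex (in the appropriate sense), the support $S_t$ of $\lambda_t$ is convex, and therefore a single interval of $\Sigma$. Applying this to $\Sigma=[0,+\infty)$ and $\varphi=\phi$ concludes the proof.

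The argument is essentially a direct invocation of a standard theorem; the only substantive step is the elementary bound $\phi''(x)\geq 1-\gamma\geq 0$ on $[0,+\infty)$, so I do not anticipate any real obstacle. The slight subtlety worth noting is that the converse does not hold: supports may also be single intervals when $\gamma>1$ (as will be seen in the ensuing case analysis for regions $\mathcal{A}$ and $\mathcal{B}$), so the lemma only gives a sufficient, not a necessary, condition.
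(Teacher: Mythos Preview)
Your proof is correct and follows precisely the paper's approach: the lemma is presented there as an immediate consequence of the fact (from \cite{Saff:97}) that convexity of the external field forces the support to be an interval, and your computation $\phi''(x)=1-\gamma/(x+1)^2\geq 0$ on $[0,+\infty)$ simply makes that convexity explicit. (One minor wording point: the chain $\gamma/(x+1)^2\leq\gamma$ requires $\gamma\geq 0$; for $\gamma<0$ just note $\gamma/(x+1)^2<0<1$ directly.)
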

Moreover, in \cite[Th.1.1]{Saff:97}, other sufficient condition for the convexity of the support in $[0,+\infty)$ can be found, namely, when the function $x\phi'(x)$ is increasing in $[0,+\infty)$. From \eqref{simplified}, we have that $(x\phi')'(x) = 2x + \beta  + \frac{\gamma}{(x+1)^2}\,.$
If $\beta >-1$ and $\beta + \gamma >0$, the polynomial $q(x)=(2x+\beta)(x+1)^2+\gamma = 2x^3 + (\beta +4)x^2 + 2(\beta +1)x + \beta + \gamma$ is positive for $x>0$ and, then, $x\phi'(x)$ is an increasing function on $[0,+\infty)$. Thus, joining this fact with the result in Lemma \ref{lem:convex} we can state

\begin{lemma}\label{thm:single}
If $\gamma \leq 1$ or $\beta \geq -1$, $S_t$ consists of a single interval for any $t\geq 0$.
\end{lemma}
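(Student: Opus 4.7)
The plan is to split the disjunctive hypothesis into its two cases, each handled by a sufficient condition for $S_t$ to be a single interval that the excerpt has already imported.

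First, if $\gamma \leq 1$, the conclusion is exactly Lemma \ref{lem:convex}: a direct check gives $\phi''(x) = 1 - \gamma/(x+1)^2 \geq 1 - \gamma \geq 0$ on $[0,+\infty)$, so $\phi$ is convex there, and the general convexity principle for external fields (quoted just before the statement) forces the support of $\nu_t$ to be connected for every $t \geq 0$.

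Second, if $\gamma > 1$ and $\beta \geq -1$, I would apply the alternative sufficient condition from \cite[Th.1.1]{Saff:97} already recalled in the excerpt, namely that $x\phi'(x)$ non-decreasing on $[0,+\infty)$ forces $S_t$ to be an interval. The excerpt has already performed the key computation: $(x\phi'(x))'$ has the sign of the cubic
\[
q(x) = 2x^3 + (\beta+4)x^2 + 2(\beta+1)x + (\beta+\gamma).
\]
Under the present hypotheses, all four coefficients of $q$ are non-negative, and the constant term satisfies $\beta+\gamma > \beta + 1 \geq 0$. Hence $q(x)>0$ for every $x \geq 0$, so $x\phi'(x)$ is (strictly) increasing on $[0,+\infty)$ and the Saff--Totik criterion applies.

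Merging the two cases completes the argument. I do not foresee any real obstacle, since both sufficient conditions are imported from the literature and the algebraic verification of $q>0$ is immediate from sign inspection. The only technical point to keep in mind is the boundary value $\beta=-1$, where the excerpt originally uses the strict inequality $\beta>-1$; but with $\beta=-1$ and $\gamma>1$ the coefficient $2(\beta+1)$ merely drops out of $q$, while the constant term becomes $\gamma-1>0$, so $q$ is still strictly positive on $[0,+\infty)$ and the conclusion is unaffected.
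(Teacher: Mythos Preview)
Your proposal is correct and follows essentially the same approach as the paper: convexity of $\phi$ handles $\gamma\le 1$, and the Saff--Totik criterion via positivity of the cubic $q$ handles the remaining case. Your case split is in fact slightly cleaner than the paper's, since by first assuming $\gamma>1$ you obtain $\beta+\gamma>0$ for free and you explicitly dispose of the boundary value $\beta=-1$, which the paper glosses over.
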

Therefore, our main interest is studying the open quadrant $\{(\beta,\gamma)\in \R^2: \beta <-1, \gamma >1\}\,,$ where in principle the support can consist of two intervals for some values of $t$; but it is not our unique interest. Bearing in mind the application of our results to the original problem in the whole real axis, we are also very concerned with the question about whether the origin belongs to the support (if it is not the case, the support of $\lambda_t$ cannot be a single interval).

As we said above, the other crucial ingredient for the study of the evolution of the equilibrium measure as the total mass $t$ increases from $0$ to $+\infty$ is the dynamical system for the endpoints of the support and the other zeros of the density $\nu'_t$ given in \eqref{variationzeros}. Indeed, assume that for some $t>0$ one of the scenarios \textbf{(a)-(c)} above takes place. Then, one has,

\begin{itemize}

\item \textbf{(a1)}

\begin{equation}\label{a1}
\begin{split}
\dot{a_1} & = \frac{2(a_1+1)}{(a_1-b_1)(a_1-b_2)}\,,\;\dot{b_1} = \frac{b_1+1}{(b_1-a_1)(b_1-b_2)}\,, \\[.3cm]
\dot{b_2} & = \frac{b_2+1}{(b_2-a_1)(b_2-b_1)}\,,
\end{split}
\end{equation}
where
\begin{equation}\label{a1signum}
\begin{cases}
\dot{a_1}>0, \dot{b_1}<0, \dot{b_2}>0\,,\; & \text{if}\;0<a_1<b_1<b_2\,, \\
\dot{a_1} >0, \dot{b_1} <0, \dot{b_2} >0\,,\; & \text{if}\;-1<b_2<b_1<0<a_1\,, \\
\dot{a_1} >0, \dot{b_1} >0, \dot{b_2} <0\,,\; & \text{if}\;b_2<b_1<-1<0<a_1\,, \\
\dot{a_1} >0, \dot{b_1} <0, \dot{b_2} <0\,,\; & \text{if}\;b_2<-1<b_1<0<a_1\,.
\end{cases}
\end{equation}

\item \textbf{(a2)}
\begin{equation}\label{a2}
\begin{split}
\dot{a_1} & = \frac{2(a_1+1)}{|a_1-b_1|^2}\,>\,0\,,\;\dot{(\Re b_1)}\,=\,-\frac{a_1+1}{2|a_1-b_1|^2}\,<\,0\,, \\[.3cm]
\dot{(\Im b_1)}\,& =\,-\frac{|a_1-b_1|^2+(a_1+1)(\Re b_1 -a_1)}{2\Im b_1 \,|a_1-b_1|^2}\,.
\end{split}
\end{equation}

\item \textbf{(b)}
\begin{equation}\label{b}
\dot{a_2} = \frac{2(a_2+1)}{(a_2-a_3)(a_2-b_1)}\,<\,0\,,\;\dot{a_3} = \frac{2(a_3+1)}{(a_3-a_2)(a_3-b_1)}\,>\,0\,,
\end{equation}
and
\begin{equation*}\label{bbis}
\dot{b_1}  = \frac{b_1+1}{(b_1-a_2)(b_1-a_3)}\,\begin{cases} >\,0\,,\; & \text{if}\; b_1 >-1\,, \\
<\,0\,,\; & \text{if}\; b_1 <-1\,, \end{cases}
\end{equation*}
where taking residues at $z=-1$ in $\sqrt{R}$ (see \eqref{Rsimplified}) shows that $b_1 > -1$ if and only if $\gamma > 0$.

\item \textbf{(c)}
\begin{equation}\label{c}
\begin{split}
\dot{a_1} & = \frac{2(a_1+1)(a_1-\xi)}{(a_1-b_1)(a_1-a_2)(a_1-a_3)}\,>\,0\,,\;\dot{b_1} = \frac{2(b_1+1)(b_1-\xi)}{(b_1-a_1)(b_1-a_2)(b_1-a_3)}\,, \\[.3cm] \dot{a_2} & = \frac{2(a_2+1)(a_2-\xi)}{(a_2-b_1)(a_2-a_1)(a_2-a_3)}\,<\,0\,,\;\dot{a_3} = \frac{2(a_3+1)(a_3-\xi)}{(a_3-b_1)(a_3-a_2)(a_3-a_1)}\,>\,0\,,
\end{split}
\end{equation}

where $b_1,\xi \in (a_1,a_2)$ satisfy, respectively:
    $$\int_{a_1}^{a_2}\,\frac{x-b_1}{x+1}\,\sqrt{\frac{(x-a_1)(x-a_2)(x-a_3)}{x}}\,dx\,=\,0\,,\;\int_{a_1}^{a_2}\,\frac{x-\xi}{\sqrt{x(x-a_1)(x-a_2)(x-a_3)}}\,dx\,=\,0\,.$$

\end{itemize}

Now, we are in a position of proving the main results.

\textbf{Proof of Theorem \ref{thm:two-cut}}

First, we concentrate in the region of the $(\beta,\gamma)\,$-plane where $\beta <-1$ and $\gamma > 1$, for which a two-cut phase is, in principle, feasible.
Observe that this domain consists of three different regions concerning our previous analysis  on the values of $y_{\pm}$ in \eqref{complexy}-\eqref{locy-}.

First, for $\dsty -\beta < \gamma < \left(\frac{\beta -1}{2}\right)^2\,,$ (region I in Figure 2) we have that both $y_{\pm}>0$. This region is split by the curve where $\phi(y_+)=0$,
in such a way that in the right side of this region, that is, when $\phi(y_+)>0$, the absolute minimum of $\phi$ in $[0,+\infty)$ is attained at the origin and, thus, for $t$ small enough,
$S_t = [0,a_1(t)]\,,$ with $a_1(t)$ an increasing function. The scenario (a1) takes place, with $a_1(0)=0 < b_1(0) = y_- < y_+ = b_2(0)$. From \eqref{a1signum} we see that
$a_1$ and $b_1$ are going to collide in a finite time $T_0>0$. Indeed, if it were possible that $a_1$ and $b_1$ do not collide at a finite time, it would imply that $\dot{a_1}$ and $\dot{b_1}$ tend to $0$ as $t\rightarrow +\infty$, but from \eqref{a1} this would also imply that $b_2$ diverges to $+\infty$. But taking residues at infinity in \eqref{Rsimplified}, with the density $\nu'_t$ given by \eqref{scen(a)}, yields that the (weighted) average of zeros of $A$ and $B$ is constant, that is, $a_1+2(b_1+b_2) = -2(\beta +1)\,$ and, thus, it is not possible that just one of these zeros diverges.

Therefore, it is clear that $a_1$ and $b_1$ are in course of collision. But if that collision actually took place,
it is easy to see that for the collision time $t=T_0$, $$\int_{a_1(T_0)}^{b_2(T_0)}\,\frac{(x-b_1(T_0))(x-b_2(T_0))}{x+1}\,\sqrt{\frac{x-a_1(T_0)}{x}}\,dx < 0\,,$$ and this would be in contradiction with \eqref{equilibrium}.
Since the function $$F(t) = \int_{a_1(t)}^{b_2(t)}\,\frac{(x-b_1(t))(x-b_2(t))}{x+1}\,\sqrt{\frac{x-a_1(t)}{x}}\,dx$$ is strictly monotonically decreasing (it is easy to check that $F'(t)<0$) and $F(0)>0$,
we have that there exists a unique $0<T_1<T_0$ such that $F(T_1) = 0$. Therefore, at $t=T_1$, the initial configuration must change, and the unique possible change
in order to satisfy \eqref{equilibrium} is the bifurcation of $b_2$ in two points $a_1,a_2$, which become the endpoints of a new cut.
Thus, at $t=T_1$ we have the birth of a new cut or, in other words, a type I singularity.

Then, for $T_1<t<T_1+\varepsilon$, we are in the scenario (c), that is, the two-cut situation. Taking into account the dynamical system \eqref{c}, it is clear that endpoints $a_1$ and $a_2$
(and, thus, also $b_1$) finally collide at a finite time, $T_2 > T_1$. At $t=T_2$, a type II singularity takes place: the merger (or fusion) of two cuts or, what is the same,
the closing of a gap. At this point, a quadruple root of $R_t$ is born from the collision of $a_1, a_2$ and $b_1$ (which is a double root). Then, for $t>T_2$, the one-cut situation follows unchanged.

A similar conclusion follows for the region where $y_{\pm}>0$ but $\phi(y_+)<0$. In this case, we start from scenario (b) above, with $0<b_1<a_2<a_3$ and
\begin{equation}\label{compat}
\int_0^{a_2}\,\frac{x-b_1}{x+1}\,\sqrt{(x-a_2)(x-a_3)}\,dx\,>\,0\,.
\end{equation}
Initially, $a_2(0)=a_3(0)=y_+>y_-=b_1(0)>0\,.$ Similar arguments as above shows that there exists a finite time $T_0>0$ such that $b_1(T_0)=a_2(T_0)$ but, at that moment $T_0$ condition \eqref{compat} would be violated. As above, there is a unique admissible change in the initial configuration taking care of \eqref{equilibrium}: this is the appearance of a new cut $[0,a_1]$, starting form the origin, at a certain time $0<T_1<T_0$, that is, $a_1(T_1)=0$. Thus, immediately after $t=T_1$, we have a two-cut support: $[0,a_1(t)]\cup [a_2(t),a_3(t)]$, satisfying the dynamical system \eqref{c}. Then, there exists $T_2>T_1$ such that $a_1(T_2)=b_1(T_2)=a_2(T_2)$ and the merger of the two cuts occurs, as above. Therefore, we have that in the region where $y_{\pm}>0$, that is, when $\gamma >1$ and $-\gamma < \beta < 1-2\sqrt{\gamma}$, two-cut situation necessarily takes place. In these cases the dynamics of the equilibrium measure presents both type I (birth of a cut) and type II (merger of cuts) singularities.

Now, we are concerned with the region given by $\gamma >1$ and $1-2\sqrt{\gamma}<\beta<-1$, where $y_- = \overline{y_+} \notin \R$ (contained in region VII in Figure 2). Thus, we start from scenario (a2) and the dynamical system \eqref{a2}: the right endpoint of the support, $a_1(t)$, is increasing with $t$ and the real part of $b_1$ is decreasing but, what about the imaginary part? Taking a look to the last expression in \eqref{a2}, we see that for each $t>0$, the support $[0,a_1(t)]$ is contained in the circle
\begin{equation}\label{circleDt}
D_t\,=\,\left\{z\in \C\,:\,\left|z-\frac{a_1-1}{2}\right|\leq \frac{a_1+1}{2}\right\}\,,
\end{equation}
in such a way that points $a_1(t)$ and $-1$ are the endpoints of the real diameter of the disk. Now, we can see that if $b_1(t)$ lies outside $D_t$, then its imaginary part decreases; on the opposite, if $b_1(t)\in D_t$, then $\Im b_1$ increases. The boundary between these two behaviors is the case where $b_1$ (and $\overline{b_1}$) and $a_1$ collide at a certain time $t=T$. This is a type III singularity, and a quintuple root of $R_T$ occurs in $a_1=a_1(T)=b_1(T)=\overline{b_1(T)}$. Indeed, equating coefficients in \eqref{Rsimplified} and taking into account the residue at $z=-1$ of $\sqrt{R_T(z)}$, we obtain the following system of equations at $t=T$: $$2(\beta +1) = -5a_1\,,\, (\beta +1)^2+2(\gamma + \beta -T) = 10a_1^2\,,\,\gamma = (1+a_1)^{5/2}$$ and, thus, $$\gamma = \left(\frac{3-2\beta}{5}\right)^{5/2}\,.$$ Moreover, $a_1(T) = -\frac{2(\beta +1)}{5}$. It is easy to see that for $\gamma > \left(\frac{3-2\beta}{5}\right)^{5/2}\,,$ $b_1$ and $\overline{b_1}$ enter in the circle $D_t$ before they collide in the real axis; after that, $\Im b_1$ increases and, thus, the one-cut setting follows for any $t>T$, since in case that $b_1$ finally leaves the disk and $\Im b_1$ decreases again, $b_1$ and $\overline{b_1}$ only could collide in the real semi-axis $(-\infty,-1)$ (because of $\Re b_1$ is always decreasing) and a new cut cannot be born. On the contrary, when $\gamma < \left(\frac{3-2\beta}{5}\right)^{5/2}\,,$ $b_1$ and $\overline{b_1}$ attains the real axis forming a quadruple real root of $R_{T_0}$: $b_1 = b_1(T_0) \in \C \setminus D_t$ . Thus, for $t>T_0$, this quadruple root splits into two double real roots (is the unique feasible option) and the phase diagram drawn above, that is, the birth of a new cut at the rightmost double root (at a certain time $T_1>T_0$) and the further merger of two cuts at $T_2>T_1$, takes place again.

Now, it remains to consider the region where $\gamma >1$ and $\beta <-\gamma$ (contained in region II in Figure 2). In this case, for small $t$ we are in the scenario (b), and the dynamical system \eqref{b} holds with $-1<y_-<0=b_1(0)<0<a_2(0)=a_3(0)=y_+$. Thus, $\dot{b_1}>0,\dot{a_2}<0,\dot{a_3}>0$ and, therefore, both $b_1$ and $a_2$ tend to $0$. The question is: who attains before the origin? It is clear that if $a_2$ is the ``winner'', that is, if there exists a time $T_0>0$ for which $a_2(T_0)=0$ and $b_1(T_0)<0$, then for $t>T_0$ $b_1$ goes far away from the origin and the support will be $S_t = [0,a_3(t)]$, with $a_3(T)$ an increasing function for $t\geq T_0$ (to be coherent with our notation, from this moment, $a_3$ will be renamed $a_1$). On the contrary, if for some $T_0>0$, we have that $b_1(T_0)=0$ and $a_2(T_0)>0$, then $b_1$ will enter into the positive semi-axis and $b_1$ and $a_2$ will be in course of collision; but this collision cannot occur since in this case \eqref{equilibrium} would be violated. Thus, necessarily at a certain $T_1>T_0$, previous to the collision time, the origin creates a new cut $[0,a_1(t)]$ (birth of a cut), and the two-cut phase starts. As above, for some value $T_2>T_1$, the fusion of cuts must take place. The boundary case occurs precisely when $b_1$ and $a_2$ attains the origin at the same time $T>0$. In that case, $R_T$ has a triple root at the origin and, from \eqref{Rsimplified} and taking into account the residue at $z=-1$ of $\sqrt{R_T(z)}$, we obtain the following system of equations at $t=T$:
$$2(\beta +1) = -a_3\,,\, (\beta +1)^2+2(\gamma + \beta -T) = 0\,,\,\gamma = \sqrt{1+a_3}\,.$$ Thus, we have that $\gamma = \sqrt{-1-2\beta}$ and
$T = \frac{(\beta +1)^2}{2}\,+\beta +\sqrt{-1-2\beta}>0\,.$

The proof of parts (a) and (b) of Theorem 2.1 is easy taking into account the discussion above and the fact that for $(\beta,\gamma)\in A$, we have that $\beta+\gamma>0$ and, thus, the origin is a local minimum of $\phi$; conversely, for $(\beta,\gamma)\in B$, $\beta+\gamma<0$ holds and the origin is a local maximum of the external field. Hence, the proof of Theorem \ref{thm:two-cut} is completed.

\begin{remark}\label{rem:circle}
Through the proof of Theorem \ref{thm:two-cut}, all the singularities considered above (types I-III) have appeared. Type I and II singularities are associated to changes in the number of cuts of the support. However, maybe the most interesting is the type III one, where no topological change takes place but an analytical ``catastrophe'' occurs: the derivative of the Robin constant of the support $S_t$ has an infinite jump there or, in the random matrix framework, the limit free energy has a third order phase transition with an infinite jump (see \cite[Section 4.3]{MOR2013}). This transition marks one of the boundaries between the region of the $(\beta,\gamma)$--plane where it is possible a two-cut support and the region where it is not possible. In \cite{MOR2013}, such a difference was given in terms of which side of a certain equilateral hyperbola the pair of conjugate imaginary zeros of $B_t$ lie on; in the present case, this boundary role is played by the circle $D_t$ defined in \eqref{circleDt}.

\end{remark}

\begin{remark}\label{rem:newphasetr}
On the other hand, in the proof of Theorem \ref{thm:two-cut} it also seems to appear some new types of singularities, namely:

\begin{itemize}

\item[(i)] In scenario (b), when the left endpoint $a_2$ attains the origin, as a soft-edge, which immediately becomes a hard-edge of the support.

\item[(ii)] The birth ``from nothing'' of a new cut with the origin as a hard edge.

\item[(iii)] When the double root of $R$, $b_1$ and the left endpoint $a_2$ (simple root of $R$) collide at the origin, producing a triple root of $R$.

\end{itemize}

However, they are actually not new types of singularities if we see them from the viewpoint of the original equilibrium problem of $\lambda_t$ in $\R$, to avoid the ``boundary effect'' of the origin in the simplified model. To explain it, note that what happens in the context of the equilibrium problem of $\lambda_t$ is that the numerator of $R$ is a polynomial of degree $6$, and it is possible that different types of singularities occur simultaneously (as it was pointed out above). Thus, from the viewpoint of the original problem in the real axis, in (i) a type II singularity (closing of a gap) occurs; (ii) corresponds to a type I singularity (birth of a cut around the origin). Finally, (iii) is maybe the most interesting: it takes place simultaneously the collision of conjugate imaginary zeros of $R$ at the origin and the closing of a gap a the same point; in other words, a combination of a type III and a type II singularity occurs at the origin (the symmetry of the original field $\varphi$ obviously reinforces the role of this point).

On the other hand, as we said in the Introduction, the simplified problem in $[0,+\infty)$ we are dealing with is also of interest itself and it has not to be interpreted as coming from a symmetric problem in the real axis. In this last sense, we think that behaviors described in (i)-(iii) above are worthwhile to analyze a bit more. So, when (i) occurs the left endpoint of the support attains the origin as a soft-edge (i.e., with a ``square root-type'' behavior) and, immediately, the origin turns into a hard-edge (``inverse of square root-type'' behavior) due to the nature of this point as a boundary of the conductor. Maybe, more interesting is the situation in (ii): there, $z=0$, a regular point of $R$, gives birth simultaneously a pole and a zero of $R$ (something like a ``dipole'', in a physical sense); the pole means a hard-edge of the new cut of the support at the origin and the zero, the right endpoint of this new cut, which immediately goes far from the origin. Finally, (iii) occurs when (i) and (ii) take place simultaneously.

\end{remark}

\textbf{Proof of Theorem \ref{thm:end}}

From the discussion above it is clear that there exists $T^*>0$ such that $S_t = [0,a_1(t)]$ for $t>T^*\,,$ where $a_1(t)$ is an increasing function of $t$. Thus, it remains to prove the evolution of the other two (double) zeros of the polynomial $R_t$. We assume that for $t>T^*\,,$ these two remainder zeros, say $b_1,b_2$ are negative real numbers $b_2<b_1<0$ or a pair of conjugate imaginary numbers $b_2=\overline{b_1} \in \C \setminus \R\,,$ since by Theorem \ref{thm:two-cut} we know that the other option, i.e. $0<a_1<b_1<b_2$, produces first the birth of a new cut and, later, the closing of the gap, in such a way that finally the pair of double zeros become imaginary.

Suppose, first, that $b_2<b_1<0$. Then, taking into account the dynamical systems for the different scenarios, we have:
\begin{itemize}

\item If $b_2<b_1<-1\,,$ then for $t>T^*$, $\dot{b_2}<0$ and $\dot{b_1}>0$ and, thus, $b_2\rightarrow -\infty$ and $b_1\rightarrow -1^-$.

\item If $b_2<-1<b_1<0$, it is also easy to see that it holds $b_2\rightarrow -\infty$ and $b_1\rightarrow -1^+$.

\item If $-1<b_2<b_1<0$, then $\dot{b_2}<0$ and $\dot{b_1}<0$ and, therefore, $b_2$ and $b_1$ collide in a finite time $T^{**}$ producing a quadruple root. Then, this quadruple root splits into two conjugate imaginary double roots. Thus, this case becomes the case where $b_2=\overline{b_1} \in \C \setminus \R\,.$

\end{itemize}

Therefore, it only remains to deal with the situation where $b_2=\overline{b_1} \in \C \setminus \R\,.$ In addition, it may be assumed that $\Re b_1 >-1\,,$ because otherwise, since $\displaystyle \frac{\partial \Re b_1}{\partial t} < 0\,,$ $b_1$ and $\overline{b_1}$ would be in course of collision in the real axis, producing a quadruple root in $(-\infty ,-1)$ and we would enter in the first scenario above. Indeed, if for $t=T^*$, $\Re b_1 > a_1$, we have that $b_1$ will enter into the disk $D_t$ for some $t=T^{**}>T^*$. Then, since $\dot{(\Re b_1)} < 0\,,$ it finally ``escapes'' from $D_t$ by the left-hand side and $b_1$ and $\overline{b_1}$ will collide in the real semi-axis $(-\infty,-1)$, like in the first case above (take into account that before the final escape from $D_t$, it is possible in principle that $b_1$ escapes and be captured to escape again several times).

This renders the proof.

\begin{remark}\label{rem:illustration}
It seems convenient to show with detail by means of an illustrative example, how a combined use of the nonlinear system of equations derived from \eqref{Rsimplified}, by taking residues at $\infty$ and at $-1$, together with the dynamical systems \eqref{a1}-\eqref{c}, can be used to give a detailed description for the evolution of the main parameters of the equilibrium measure. With this purpose in mind, set $\beta=-4$ and $\gamma=3$, for which $y_-\approx -0.3$ while $y_+\approx 3.3$ (region II in Fig. 2 and region $\mathcal{C}_-$ in Fig. 1). This means that we are initially in scenario $(b)$; $a_2$ and $a_3$ are born at $y_+$, while $b_1$ begins at $y_-$. Equations \eqref{b} assert that $a_3$ increases while $a_2$ and $b_1$ approach the origin. From \eqref{Rsimplified}, the values of these roots can be obtained by solving the nonlinear system of equations
\begin{align*}
\begin{cases}
a_2+a_3+2b_1+1= & -2\beta,\\
(a_2^2-a_3)^2-(8+4a_3+4a_2)(1+b_1)= & 8(t-\gamma),\\
\sqrt{(1+a_2)(1+a_3)}(1+b_1) = & \gamma.
\end{cases}
\end{align*}

The reader can check that $b_1$ is the ``winner'' in the race against $a_2$ to attain the origin; this moment corresponds with $t=5/2$, with $a_2=3-\sqrt{7}$ and $a_3=3+\sqrt{7}$. As $t$ increases, $b_1$ becomes positive and continues increasing. At a certain moment, the saturation of inequality \eqref{compat} occurs; the corresponding $t$ can be obtained solving the equality in \eqref{compat} and can be estimated numerically: $b_1\approx 0.07$, $a_2\approx 0.17$, $a_3\approx 5.68$ and $t\approx 2.58$. Thus, we are dealing with a type I phase transition (birth of a cut at the origin).

The next phase, in which scenario (c) (two-cut) takes place, begins with a decomposition of the regular point $z=0$ (where there was not a zero or a pole of $R$) in a zero of $A$ and a pole at the hard-edge $z=0$. Then, as $t$ increases, from \eqref{c} it follows that $a_1$ also increases, while $a_2$ decreases, so they tend to collide; in addition, $a_3$ increases. The approximated values can be obtained solving numerically the equations
            \begin{align*}
            \begin{cases}
                a_1+a_2+a_3+2b_1+2 = & -2\beta,\\
                a_1^2+a_2^2+a_3^2-2a_1a_2-2a_1a_3-2a_2a_3-4(b_1+1)(a_1+a_2+a_3)-8-8b_1
                    = & -8(\gamma-t),\\
                \sqrt{(1+a_1)(1+a_2)(1+a_3)}(b_1+1) = & \gamma,\\
                \int_{a_1}^{a_2}\frac{\sqrt{(x-a_1)(a_2-x)(a_3-x)}(x+b_1)}{\sqrt{x}(x+1)}dx = & 0\,.
            \end{cases}
            \end{align*}
For $t\approx 2.60$, $a_1$, $b_1$ and $a_2$ collide at the point $b \approx 0.07$, creating a double root of $B$, which corresponds with the second phase transition (type II singularity: merger of cuts or closing of a gap).

Finally, as $t$ continues growing up, scenario $(a2)$ takes place. By \eqref{a2}, $a_1$ (which, according to our notation, is now the name of the former $a_3$) continues increasing, while the double root of $B$ splits into two complex conjugated roots, with a decreasing real part. This couple of imaginary roots are going away from the real axis since they are inside the circle $D_t$ \eqref{circleDt}. The equations to determine the parameters of the equilibrium measure are
            \begin{align*}
            \begin{cases}
             a_1+2b_1+2b_2+2 = & -2\beta\,,\\
             a_1^2-8(1+b_1)(1+b_2)-4a_1(b_1+b-2+1) = & -8(\gamma-t)\,,\\
             \sqrt{1+a_1}(1+b_1)(1+b_2) = & \gamma \,,
             \end{cases}
            \end{align*}
and $b_1$ and $b_2$ continues going away from the real axis while they are inside the circle $D_t$, but at some moment they get out the circle and begin to approach the real axis and at the left side of $-1$. The roots $b_1$ and $b_2$ collide at $x\approx -1.89$ and $t\approx 43.94$, become real and begin to separate (scenario (a1) takes place); finally, $b_1$ converges to $-1$ and $b_2$ goes to $-\infty$ according to \eqref{a1} and Theorem \ref{thm:end}.

\end{remark}

\subsection{Dynamics with respect to the variation of the prescribed charge}
The aim of this section is to study the evolution of the equilibrium measure and its support when other parameter of the problem varies, in particular, the position of the prescribed charge, especially dealing with the case where $v\searrow 0$ (connecting with the so-called generalized Gaussian-Penner models, see \cite{Deo1993} and \cite{Deo2002}, to only cite a few).

In order to do it, it seems convenient to consider the variation of the equilibrium measure with respect to parameter $v$, in place of $t$ as in the previous section. For it, Theorem \ref{parameters} above will be used.

Since in this case we mainly deal with the situation when $v\searrow 0$, it is not suitable making use of the external field in the simplified form \eqref{simplified}. Now, it is better to consider the external field \eqref{semiaxis}, setting $\beta = 2b$ and $\gamma = 2c$, that is,
\begin{equation}\label{penner}
\phi (x) = \frac{x^2}{2} + \beta x + \gamma \log (x+v)\,, \;\; x\in (0,+\infty)\,,
\end{equation}
with $\beta,\gamma \in \R$ and $v>0$. We are interested in studying the evolution of the equilibrium measure $\nu$ in the external field \eqref{penner} and, in particular, our aim is describing the dynamics when $v\searrow 0\,.$ For the sake of simplicity, we also use the notation $\dot{f}$ for denoting the derivative of a function $f$ with respect to the parameter $v$: $\dsty \dot{f} = \frac{\partial f}{\partial v}\,.$

Now, our main result in this subsection is announced.

\begin{theorem}\label{thm:mainv}
Let $\nu_v = \nu_{t,v}$ be the equilibrium measure in $[0,+\infty)$ in the external field $\phi$ given by \eqref{penner}.

\begin{itemize}

\item [(a)] If $\gamma < 0$, then $$\lim_{v\searrow 0}\,\nu_v = \sigma\,,$$ where $$\sigma'(x) = \frac{1}{\pi}\,\frac{(x-b_1)\,\sqrt{(x-a_2)(a_3-x)}}{x}\,,\;x\in (a_2,a_3)\,,$$
with $b_1,a_2,a_3$ given by \eqref{admiss}-\eqref{b1}, and the convergence holds in the weak-* sense. Indeed, $\nu_v$ can be continued analytically for $v<0$.

\item [(b)] If $\gamma > 0$, we have that
\begin{equation}\label{limnoadmiss}
\lim_{v\searrow 0}\,\nu_v = \begin{cases} t\,\delta_0\,,\; & \text{if}\;t\leq \gamma\,,\\ \gamma\,\delta_0 + (t-\gamma)\,\rho\,,\; & \text{if}\;t > \gamma\,,
\end{cases}
\end{equation}
where $\rho$ is the unit equilibrium measure of $[0,+\infty)$ in the presence of $\psi (x) = \frac{x^2}{2}\,+\beta x\,.$

\end{itemize}

\end{theorem}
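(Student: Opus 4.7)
The two parts of the theorem are qualitatively different. For $\gamma<0$ the logarithmic charge is repulsive and the limiting field remains admissible on $[0,+\infty)$, while for $\gamma>0$ the attractive charge becomes singular as it reaches the conductor and forces a Dirac component to appear at the origin.

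\emph{Part (a), $\gamma<0$.} The limit field $\phi_0(x)=\frac{x^2}{2}+\beta x+\gamma\log x$ is itself admissible on $[0,+\infty)$ because $\gamma\log x\to+\infty$ as $x\to 0^+$. The same Cauchy-transform analysis that underlies Theorem~\ref{Rrational} applies: $(-\widehat{\sigma}+\phi_0')^2$ is a rational function whose structure forces a one-cut support $(a_2,a_3)$ with $a_2>0$ and density of the claimed form, the three parameters $a_2,a_3,b_1$ being determined by the nonlinear system \eqref{admiss}--\eqref{b1} obtained from matching coefficients at infinity and the residue at $z=0$. For $v>0$ the analog of \eqref{Rsimplified} for the field \eqref{penner} produces the corresponding nonlinear system for $a_2(v),a_3(v),b_1(v)$ with coefficients that are polynomial in $v$; this system reduces to that for $\sigma$ when $v=0$ and has non-degenerate Jacobian there, since $a_2(0)>0$ and the zeros are simple and well-separated. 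The implicit function theorem then produces real-analytic continuation of the triple $(a_2(v),a_3(v),b_1(v))$ through $v=0$, in particular to $v<0$, and the weak-$*$ convergence $\nu_v\to\sigma$ follows from continuity of these parameters together with the explicit density formula.

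\emph{Part (b), $\gamma>0$.} Tightness of $\{\nu_v\}_{v>0}$ is immediate from the uniform quadratic growth of $\phi_v$ at infinity, so along any sequence $v_n\searrow 0$ a subsequence converges weak-$*$ to some positive measure $\mu$ of total mass $t$ on $[0,+\infty)$. Write $\mu=m\,\delta_0+\mu_1$ with $\mu_1(\{0\})=0$; the value of $m$ is pinned down by the leading $\log v$-asymptotics of $I_{\phi_v}(\nu_v)$. A mass-$m$ cluster of width $O(v)$ near the origin contributes $-m^2\log v+O(1)$ to the self-energy, while the field value $\phi_v(x)=\gamma\log v+O(1)$ on such a cluster contributes $2m\gamma\log v+O(1)$, giving a $\log v$-coefficient equal to $2m\gamma-m^2$. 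Minimising this over admissible $m\in[0,t]$ yields $m=\min(t,\gamma)$. The continuous part $\mu_1$ is then identified from the limiting interior equilibrium condition on $\supp\mu_1\subset(0,+\infty)$:
\[
V^{\mu_1}(x)+\frac{x^2}{2}+\beta x+(\gamma-m)\log x = \const,
\]
so if $m=\gamma$ the log terms cancel and $\mu_1$ is exactly the equilibrium measure of mass $t-\gamma$ for $\psi(x)=\frac{x^2}{2}+\beta x$, giving $\mu_1=(t-\gamma)\rho$; if $m=t<\gamma$ then no mass remains and $\mu_1=0$. Uniqueness of $\mu$ upgrades the subsequential convergence to full convergence as $v\searrow 0$.

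\emph{Main obstacle.} The delicate point is the rigorous $\log v$-asymptotics of $I_{\phi_v}(\nu_v)$ in part~(b), in particular the claim that $m=\min(t,\gamma)$ is the unique optimal atom mass uniformly over competing configurations. A clean route is a two-scale $\Gamma$-convergence argument with recovery sequence $\eta_v^{(m)}+\mu_1$, where $\eta_v^{(m)}$ is a mollified Dirac of mass $m$ on scale $v$ and $\mu_1$ is the continuous limit part, combined with a matching lower bound obtained by decomposing $I_{\phi_v}(\nu_v)$ according to cut-off distances from the origin. An alternative, closer to the tools already exploited in Subsection~2.1, is to apply Theorem~\ref{parameters} with $\tau=v$ to obtain a parameter-ODE for the zeros of the polynomials $A$ and $B$ in the analog of \eqref{Rsimplified}, and to integrate it down to $v=0$, verifying that it develops a singularity consistent with the stated atomic limit.
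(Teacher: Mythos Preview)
Your Part~(a) is essentially the paper's argument: admissibility of $\phi_0$, one-cut Cauchy-transform structure, and the nonlinear system \eqref{admiss}--\eqref{b1}. The paper uses the $v$-dynamical system from Theorem~\ref{parameters} (computing $\dot a_2,\dot a_3,\dot b_1$ explicitly) rather than the implicit function theorem, but the content is the same. Your non-degeneracy claim for the Jacobian is asserted, not proved; the paper's monotonicity argument sidesteps this.

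Part~(b) is a genuinely different route. The paper does not touch the weighted energy $I_{\phi_v}$ at all. Instead it proceeds purely through the algebraic structure of $\sqrt{R_t}$: first a lemma (Lemma~\ref{lem:regular}) rules out infinitely many phase transitions as $v\searrow 0$, so that for small $v$ the support is definitively either $[0,a_1]$ or $[0,a_1]\cup[a_2,a_3]$; then the nonlinear systems \eqref{nonlsyst(a)} and \eqref{nonlsyst(c)} are analyzed directly, the third equation in each forcing $a_1\searrow 0$ or $b_1\to 0$; finally the limiting Cauchy transform is computed explicitly and read off as $t/z$ (giving $t\delta_0$) or $\gamma/z$ plus the Cauchy transform of the mass-$(t-\gamma)$ equilibrium measure for $\psi$. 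No $\Gamma$-convergence, no $\log v$ bookkeeping. Your variational heuristic is correct and attractive, and your identification of $m=\min(t,\gamma)$ via maximizing $m(2\gamma-m)$ on $[0,t]$ is right; but the matching lower bound you flag as the main obstacle is genuinely delicate, and the equilibrium condition you write for $\mu_1$ needs an argument (passing Euler--Lagrange to the limit when the field is becoming non-admissible is not automatic).

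Your ``alternative'' is close to what the paper actually does, but note two points. First, the paper does not integrate the $v$-ODE down to $0$; it uses the ODE only for monotonicity and to prove Lemma~\ref{lem:regular}, then takes limits in the \emph{algebraic} equations. Second, Lemma~\ref{lem:regular} is not a triviality: its proof uses the Laurent expansion at $z=-v$ to bound $h_1<3v$ and an expression for $b_1$ as a ratio of integrals to exclude oscillating scenarios. If you pursue your alternative, this is the step that replaces your $\Gamma$-lower-bound and deserves the emphasis.
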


The rest of this Subsection is devoted to the proof of Theorem \ref{thm:mainv}.

Let us assume we are given fixed $\beta,\gamma \in \R$ and $t>0$, and suppose that for $v$ in a certain open interval in $(0,+\infty)$, $S_v = S_{t,v}$ consists of a single interval $[0,a_1]$. Thus, from Theorem \ref{parameters} and taking into account the characterization \eqref{condparam} and that $\dsty \dot{\phi} = \frac{\gamma}{x+v}$, one has:
\begin{equation}\label{derivparam1}
\frac{\partial}{\partial t}\,\left(\frac{(z-b_1)(z-b_2)}{z+v}\,\sqrt{\frac{z-a_1}{z}}\right)\,=\,\frac{H(z)}{(z+v)^2\,\sqrt{z(z-a_1)}}\,,
\end{equation}
where $H(z) = C(z-h_1)\,,$ for a certain constant $C$ and $h_1\in (0,a_1)$. In a similar way, if for this interval of values of $v$, $S_{v}$ consists of a single interval $[a_2,a_3]$ with $0<a_2<a_3\,,$  then:
\begin{equation}\label{derivparam2}
\frac{\partial}{\partial t}\,\left(\frac{z-b_1}{z+v}\,\sqrt{(z-a_2)(z-a_3)}\right)\,=\,\frac{H(z)}{(z+v)^2\,\sqrt{(z-a_2)(z-a_3)}}\,,
\end{equation}
where $H(z) = C(z-h_1)\,,$ for a certain constant $C$ and $h_1\in (a_2,a_3)$. Finally, when the support is comprised of the union of two disjoint intervals $[0,a_1]\cup [a_2,a_3]\,,$
with $0<a_1<a_2<a_3\,,$  then:
\begin{equation}\label{derivparam3}
\frac{\partial}{\partial t}\,\left(\frac{z-b_1}{z+v}\,\sqrt{\frac{(z-a_1)(z-a_2)(z-a_3)}{z}}\right)\,=\,\frac{H(z)}{(z+v)^2\,\sqrt{z(z-a_1)(z-a_2)(z-a_3)}}\,,
\end{equation}
where $H(z) = C(z-h_1)(z-h_2)\,,$ for a certain constant $C$ and two positive real roots, $h_1<h_2$, one of them in $(a_1,a_2)$, and the other in $(0,a_3)$.

First, consider the case where $\gamma < 0$, that is, when the Gaussian-type model is perturbed by a ``repulsive'' charge at the point $x=-v\in (-\infty,0)$. In this case, it is easy to see that the field corresponding to the limit case $v=0$, that is, $$\phi_0(x) = \begin{cases} \frac{x^2}{2} + \beta x +\gamma \log x\,,& x\in (0,+\infty)\,, \\  +\infty\,, & x\in (-\infty,0]\,, \end{cases}$$ is an admissible field in $\R$. Thus, for any pair $\beta \in \R, \gamma <0$ and $t>0$ there exists the corresponding equilibrium measure $\nu_0$, compactly supported in $(0,+\infty)$. $S_0$, the support of $\nu_0$ may be determined taking into account \eqref{Cauchytr}, where now the rational function will be of the form (like in scenario (b) above): $$R_t(z) = \frac{z-b_1}{z}\,\sqrt{(z-a_2)(z-a_3)}\,,\,b_1<0<a_2<a_3\,.$$ Indeed, equating coefficients in the above expression and taking residues at the origin, the following system of nonlinear equations holds:
\begin{equation*}\label{nonlinearsyst}
\begin{cases} 2b_1+a_2+a_3 +2\beta & = 0\,, \\ b_1^2+2b_1\,(a_2+a_3) + a_2 a_3 + 2(t-\gamma) - \beta^2& = 0\,, \\ b_1^2\,a_2 a_3 & = \gamma^2\,, \end{cases}
\end{equation*}
from which we obtain that
\begin{equation}\label{admiss}
\begin{split}
a_2  & = -(b_1+\beta) -\,\sqrt{(b_1-\beta)^2-\beta^2-3b_1^2+2(t-\gamma)}\,>0\,, \\
a_3  & = -(b_1+\beta) +\,\sqrt{(b_1-\beta)^2-\beta^2-3b_1^2+2(t-\gamma)}\,>a_2>0\,,
\end{split}
\end{equation}
where $b_1$ must be a negative real root of the equation
\begin{equation}\label{b1}
p(y) = 3y^4 + 4\beta y^3 + (\beta^2+2(\gamma -t)) y^2 - \gamma^2 = 0\,,
\end{equation}
such that the corresponding values for $a_2,a_3$ in \eqref{admiss} are positive.
By the Descartes rule, it is easy to see that for $\beta \leq 2\sqrt{t-\gamma}$, polynomial $p$ in \eqref{b1} has a unique negative real root. But for $\beta > 2\sqrt{t-\gamma}$, \eqref{b1} could have, in principle, three negative real roots. However, in this case, \eqref{admiss} shows that the fact that $a_2,a_3>0$ implies that $b_1<-\beta$ and, thus, we have to look only for negative roots such that $y<-\beta <-2\,\sqrt{t-\gamma}$. But taking derivatives in \eqref{b1} and applying Rolle's Theorem, we obtain that the lower root of the polynomial $p'$ is $y_0 = -\frac{\beta}{2}\,-\sqrt{\frac{\beta^2}{12}+\frac{t-\gamma}{3}}\,>-\beta\,.$ So, in this case, if $p$ has $3$ negative real roots, then only one of them is admissible ($<-\beta$). Therefore, the recipe to find the value of $b_1$ is as follows. If $\beta \leq \sqrt{2(t-\gamma)}$, $b_1$ is the unique negative real root of $p$ in \eqref{admiss}; on the other hand, if $\beta > \sqrt{2(t-\gamma)}$, $b_1$ is the unique root of $p$ belonging to the interval $(-\infty,-\beta)\,.$ Finally, it is easy to check from \eqref{admiss} that $0<a_2<a_3\,.$

For instance, in the typical case where $\gamma = -1\,,\,\beta = 0$ and $t = 1\,,$ we have that $b_1$ is the unique negative root of the equation
$3y^4-4y^2-1 = 0\,,$
which is given by
$b_1 = -\sqrt{\frac{2+\sqrt{7}}{3}}\,=\,-1.2444...$ Thus, $a_2 = 0.294...$, $a_3 = 2.1946...$

Now, take into account that expression \eqref{derivparam2} provides information about the evolution of the support when $v\searrow 0$. In this sense, it is easy to obtain:
\begin{equation}\label{constants1}
C = -\gamma \,\frac{2v+a_2+a_3}{2\sqrt{(v+a_2)(v+a_3)}}\,>\,0\,,\;h_1 = \frac{v(a_2+a_3)+a_2a_3}{2v+a_2+a_3}\in (a_2,a_3)\,.
\end{equation}
On the other hand, taking residues at $z=-v$, we have that $\dsty \gamma = (v+b_1)\,\sqrt{(v+a_2)(v+a_3)}$ and, hence, it yields $v+b_1<0\,.$ Now, from this fact and \eqref{constants1} we get
\begin{equation*}\label{dynamics2}
\begin{split}
\dot{a_2} & = -\,\frac{2H(a_2)}{(a_2-a_3)(a_2-b_1)(a_2+v)}\,<0\,,\;\dot{a_3} = -\,\frac{2H(a_3)}{(a_3-a_2)(a_3-b_1)(a_3+v)}\,<0\,,\\
\dot{b_1} & = -\,\frac{2H(b_1)}{(b_1-a_2)(b_1-a_3)(b_1+v)}\,>0\,,
\end{split}
\end{equation*}
and, thus, we have that $a_2,a_3$ are increasing, while $b_1$ is decreasing, as $v\searrow 0$. Since the limit external field for $v=0$ is an admissible field, this renders the proof of part (a). Note that the equilibrium problem can be continued analytically for $v<0$.

Now, consider the situation when $\gamma > 0$. In this case, the corresponding limit external field $\phi_0$ (taking $v=0$ in \eqref{penner}) is not an admissible field, as in the previous case. On the other hand, since $\lim_{v\searrow 0}\,\phi(x) = -\infty$, it is clear that for $v$ small enough, $\dsty \min_{x\in [0,+\infty)}\,\phi (x) = \phi (0)\,.$ Thus, we can assure that for a small enough value of $v$, the support of the equilibrium measure is given either by a single interval $[0,a_1]$ or by the union of two disjoint intervals $[0,a_1]\cup [a_2,a_3]$.

Let us show, now, the variation of the endpoints (and the other zeros of the density of the equilibrium measure) when parameter $v\searrow 0$. From the discussion above, it
is clear that $0$ belongs to the support for such values of $v$.

Suppose, first, that we have $S_{v} = [0,a_1]$. Then, making use of \eqref{derivparam1} and taking into account that $\dsty 0<\gamma = (v+b_1)(v+b_2)\,\sqrt{\frac{v+a_1}{v}}\,$ and, hence, $(v+b_1)(v+b_2)>0$, it yields:
$$C = -\gamma \,\frac{2v+a_1}{2\sqrt{v(v+a_1)}}\,<\,0\,,\;h_1 = \frac{va_1}{2v+a_1}\in (0,a_1)\,,$$
what implies that
\begin{equation*}\label{1Ca1}
\dot{a_1} = -\,\frac{2H(a_1)}{(a_1-b_1)(a_1-b_2)(a_1+v)}\,>0\,,
\end{equation*}
that is, the endpoint $a_1$ always decreases as $v\searrow 0$. In a similar fashion we obtain for the cases where $b_1,b_2 \in \R$ (recall that we always denote by $b_1$ the closest zero of $B$ to the support $[0,a_1]$):
\begin{equation*}\label{1Cb1b2}
\begin{cases} \dot{b_1}<0\;\;\text{and}\;\;\dot{b_2}>0\,,\;\; & \text{if}\;\;a_1<b_1<b_2 \;\;\text{or}\;\;b_2<b_1<-v<0<a_1\,, \\
\dot{b_1}>0\;\;\text{and}\;\;\dot{b_2}<0\,,\;\; & \text{for}\;\;-v<b_1<b_2<0<a_1\,. \end{cases}
\end{equation*}

On the other hand, if scenario (c) takes place and we make use of \eqref{derivparam3}, it yields for the derivatives of the endpoints with respect to the parameter $v\,$:
\begin{equation}\label{dynamicsb}
\begin{split}
\dot{a_1} & = -\,\frac{2H(a_1)}{(a_1-b_1)(a_1-a_2)(a_1-a_3)(a_1+v)}\,,\;\dot{a_2} = -\,\frac{2H(a_2)}{(a_2-a_1)(a_2-b_1)(a_2-a_3)(a_2+v)}\,,\\
\dot{a_3} & = -\,\frac{2H(a_3)}{(a_3-a_1)(a_3-b_1)(a_3-a_2)(a_3+v)}\,,\dot{b_1} = -\,\frac{H(b_1)}{(b_1-a_1)(b_1-a_2)(b_1-a_3)(b_1+v)}\,.
\end{split}
\end{equation}
Let us see the sign of the leading coefficient of $H$. Since
$$\frac{H(z)}{\sqrt{zA(z)}(z+v)^2}=\frac{-\gamma}{(z+v)^2}+O(1)\,,\qquad \text{as }z\to-v\,,$$
we have that
$$H(-v)=-\sqrt{-vA(-v)}\gamma<0\,,$$
and taking into account that the two roots of $H$ are in $(0,a_3)$, we get that $C>0$. In particular, this implies that $\dot{a}_3>0$, that is, $a_3$ decreases as $v$ decreases. On the other hand, the behavior of $a_1$ and $a_2$ is not so clear because we do not know the exact location of one of the roots of $H$.

Now, we are concerned with the limit equilibrium measure as $v\searrow 0$. Thus, suppose we are given fixed $\gamma, t >0$ and $\beta \in \R$ and let us study the evolution of the equilibrium measure in the external field \eqref{penner} when $v\searrow 0$.

In order to do it, we first show in the following lemma that no phase transition occurs when $v$ is sufficiently small. In this sense, take into account that we do not know a priori if an infinite amount of phase transitions may occur as $v\searrow 0$. The following lemma ensures this cannot happen and thus, we conclude that for $v$ small enough, the support always consists of an unique interval $[0,a_1]$ or of two intervals $[0,a_1]\cup[a_2,a_3]$.

Then, we have:
\begin{lemma}\label{lem:regular}
There exists $v_0 = v_0 (\beta, \gamma, t) <0$ such that
\begin{itemize}
    \item[i)] $h_1< a_1$ for any $v<v_0$ and
    \item[ii)] there is no phase transition in the interval $(0,v_0)$.
\end{itemize}
\end{lemma}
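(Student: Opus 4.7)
The plan is to combine an asymptotic analysis of the algebraic system governing the equilibrium-measure parameters as $v\to-\infty$ with a discreteness argument for the set of phase transitions.

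The parameters $a_i$, $b_j$ and the roots of the auxiliary polynomial $H$ satisfy a system of polynomial equations whose coefficients depend polynomially on $v$ (as exhibited in Remark~\ref{rem:illustration} for the different scenarios). By discriminant considerations, the set $\mathcal{V}\subset\R$ of values of $v$ at which a phase transition of type I, II or III occurs is contained in the real zero locus of finitely many polynomials in $v$, and hence is discrete. This is the structural fact that will let us produce the threshold $v_0$ in the first place.

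For part (i) I would perform a dominant-balance analysis as $v\to-\infty$. The residue identity $H(-v)=-\sqrt{-v\,A(-v)}\,\gamma$ (and its analogues derived from \eqref{derivparam1}--\eqref{derivparam2}) shows that the product $h_1h_2$ stays bounded on the natural scale, while the singularity $z=-v$ of $R_t$ being pushed toward $+\infty$ effectively decouples the logarithmic term from the polynomial part on any compact set. Consequently, $a_1$ should approach the right endpoint of the support of the equilibrium measure of total mass $t$ in the purely polynomial external field $\psi(x)=x^2/2+\beta x$, while $h_1$ converges to a strictly smaller value. This yields $h_1<a_1$ for all sufficiently negative $v$, giving part (i) once $v_0$ is chosen small enough in absolute value.

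For part (ii) I would set $v_0:=\sup\bigl(\mathcal{V}\cap(-\infty,0)\bigr)$, decreased further if needed so that the bound $h_1<a_1$ of part (i) also holds for $v<v_0$. Since $\mathcal{V}$ is discrete and closed in any bounded subinterval, we obtain $v_0<0$ \emph{provided} phase transitions do not accumulate at $0^-$; on the interval between $v_0$ and $0$ the parameters then evolve real-analytically without collisions, which is precisely the non-transition statement.

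The main obstacle is the non-accumulation of $\mathcal{V}$ at $v=0^-$. For $\gamma<0$ this is immediate from the analytic-continuation statement already established in Theorem~\ref{thm:mainv}(a), as the parameters remain real-analytic across $v=0$. For $\gamma>0$ the degeneration at $v=0$ is singular (a mass of size $\min(t,\gamma)$ concentrates at the origin in the limit), and a rescaled asymptotic analysis is required to show that only finitely many critical values cluster in a one-sided neighborhood of $0$. A secondary subtlety is to verify that the scenario dictating which of \eqref{derivparam1}--\eqref{derivparam3} is in force remains stable on the whole ray $v<v_0$, so that $a_1$ and $h_1$ are unambiguously defined throughout; this should follow from the sign information coming out of the dominant-balance analysis in Step 1.
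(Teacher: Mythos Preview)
Your proposal rests on a misreading of the regime. The inequality $v_0<0$ in the lemma is a typo: throughout Section~2.2 the parameter $v$ is positive (see \eqref{penner}, where $\log(x+v)$ must be defined on $[0,\infty)$), the lemma is stated and used to analyze the degeneration $v\searrow 0$, and the proof immediately following it works with ``$v$ small enough'' and bounds such as $h_1<3v$. The intended statement is that there exists $v_0>0$ with (i) $h_1<a_1$ for all $v\in(0,v_0)$ and (ii) no phase transition in $(0,v_0)$. Your dominant-balance analysis as $v\to-\infty$ and your definition $v_0=\sup(\mathcal V\cap(-\infty,0))$ are therefore addressing a regime that is not under consideration (and, for $\gamma>0$, not even well-posed, since the support of $\nu_v$ contains $0$ and the external field becomes singular there when $v\le 0$).

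Beyond the misreading, the discriminant/discreteness argument you propose for (ii) does not settle the actual difficulty. The parameters satisfy an algebraic system only \emph{within a fixed scenario}; at a phase transition the scenario itself changes (e.g.\ the degree of $A$ jumps, a hard edge appears), so one is not following a single algebraic branch, and a global ``zero-locus of finitely many polynomials'' description of $\mathcal V$ is not available a priori. The paper's argument is instead dynamical and local to $v\to 0^+$: for (i) one expands $H/(\sqrt{zA}(z+v)^2)$ at $z=-v$ to get $h_1<3v$, and then rules out $a_1<3v$ by combining the residue identity $A(-v)B(-v)^2=-v\gamma^2$ with the integral formula \eqref{center} for $b_1$, which would force $b_1\to 0$ and $b_1\not\to 0$ simultaneously. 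For (ii), (i) pins down the sign pattern of $\omega$ so that only two transition types can occur, and an infinite alternation of these is excluded by the linear constraint $a_1+2b_1+2b_2=\text{const}$ coming from the $O(1)$ term of \eqref{Laurent} at $z=\infty$.
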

\begin{proof}
\begin{itemize}
    \item[i)] If the support consist of a single interval, it is obvious. Thus, let us suppose that $S_{v}=[0,a_1]\cup[a_2,a_3]$. From Theorems 1.1 and 1.3, taking Laurent series expansion at $z=-v\,,$ it holds
        $$-\widehat{\omega}(z)+\dot{\phi'}(z)=\frac{P(z)}{\sqrt{zA(z)}(z+v)^2}=
        \frac{-\gamma}{(z+v)^2}+O(1)\,,$$
        where $\omega = \frac{\partial \nu_v}{\partial v}$.
        Thus, imposing that the derivative with respect to $z$ at $z=-v$ must vanish, we get from \eqref{derivparam3} that
        $$\frac{2H'(-v)}{H(-v)}=\frac{A(-v)-vA'(-v)}{-vA(-v)}\,,$$
        and, then,
        $$\frac{2}{v+h_1}+\frac{2}{v+h_2}=\frac{1}{v}+\frac{1}{v+a_1}+\frac{1}{v+a_2}+\frac{1}{v+a_3}\,.$$
        Since we can bound
        $$\frac{4}{v+h_1}>\frac{2}{v+h_1}+\frac{2}{v+h_2}=\frac{1}{v}+\frac{1}{v+a_1}+\frac{1}{v+a_2}+\frac{1}{v+a_3}>\frac{1}{v}\,,$$
        we see that $h_1<3v$.

        Suppose now that the assertion {\emph i)} is false. Then there it would exist $v$ as small as we want such that $a_1<h_1<3v$. But the Laurent series expansion
        \begin{equation}\label{Laurent}
        -\widehat{\nu_v}(z)+\phi'(z)=\frac{\sqrt{A(z)}B(z)}{\sqrt{z}(z+v)}=\frac{\gamma}{z+v}+O(1)
        \end{equation}
        yields $A(-v)B^2(-v)=-v\gamma^2$ and, then,
        $$\left|(-v-a_2)(-v-a_3)(-v-b_1)^2\right|=\frac{v\gamma^2}{v+a_1}>\frac{\gamma^2}{4}.$$
        In particular, for such $v$, $b_1$ cannot approach the origin. On the other hand, from \eqref{eqcond}, $b_1$ can be determined as
        \begin{equation}\label{center}
        b_1=\frac{\int_{a_1}^{a_2}\frac{x}{x+v}\sqrt{\frac{(x-a_1)(x-a_2)(x-a_3)}{x}}dx}
          {\int_{a_1}^{a_2}\frac{1}{x+v}\sqrt{\frac{(x-a_1)(x-a_2)(x-a_3)}{x}}dx}\,,
        \end{equation}
        in such a way that if $v,a_1\searrow 0$, the denominator of \eqref{center} tends to $+\infty$ while the numerator remains bounded, that is, $b_1$ would tend to $0$; but it is not possible as we have seen previously. The contradiction comes from the assumption that we can find $v$ as small as we want with $a_1<h_1$. Hence, {\emph i)} is proved.

    \item[ii)] Using {\emph i)}, it is clear that for $v$ small enough, $\omega$ is negative in $[0,h_1]\subset[0,a_1]$ and positive in $[h_1,a_1]$ or $[h_1,a_1]\cup[a_2,a_3]$, depending on the number of cuts of $S_{v}$; that is, $\nu$ is increasing for $x<h_1$ and decreasing for $x>h_1$. Then, in principle, the unique phase transitions allowed are the ``death'' of the cut $[a_2,a_3]$ (in case of there were two cuts) and the split of $[0,a_1]$ into two intervals (if there was a single cut). So, the unique possibility to have infinite phase transitions in a finite range of $v$ would be the alternation of these two settings. But, if such a situation takes place, between the disappearance of the cut $[a_2,a_3]$ and the posterior split of $[0,a_1]$ the support would consist of a single interval and the roots of $B$ would have to move to the left to produce the next split. But this is not possible since the arithmetic mean of the roots of $B$ is increasing during this phase. Indeed, the root $a_1$ is decreasing and comparing the $O(1)\,$ - term of $(-\widehat{\nu_v}+\phi')$ at $z=\infty$ in \eqref{Laurent}, we have
        $$a_1+2b_1+2b_2+2=2\beta\,,$$
        and, hence, $\dot{b}_1+\dot{b}_2<0$, that is $b_1+b_2$ is increasing.

        Therefore, for $v$ small enough, if a cut disappears when $v = V>0$, then $S_{v}$ will consist of a single interval for any $v\leq V$.

\end{itemize}
\end{proof}

As a consequence, if $S_{v}$ consists of one interval for $v$ small enough, then this setting remains for any $v\searrow 0$ and $a_1$ decreases; and if $S_{v}$ consists of two intervals, this situation also holds for $v\searrow 0$ and
$$\dot{a}_1>0\,,\qquad \dot{a}_2<0\,,\qquad \dot{a_3}>0\,,$$
that is, $a_1$ approaches the origin while $a_2$ and $a_3$ approach each other.

Now, we can complete the proof of part (b) in Theorem \ref{thm:mainv}.

Recall that for $\nu$ small enough, there are only two options for the support $S_{v}$, namely, $S_{v} = [0,a_1]\,,\;0<a_1\,,\;$ or  $S_{v} = [0,a_1]\cup [a_2,a_3]\,,\;0<a_1<a_2<a_3.$

In the first case, we are in scenario (a) above and, thus, from \eqref{scen(a)} we have:
\begin{equation}\label{nonlsyst(a)}
\begin{cases} a_1+2(b_1+b_2) & = -2(\beta + v)\,,\\(b_1+b_2)^2+2a_1(b_1+b_2)+2b_1b_2 & = (\beta + v)^2 + 2(\beta v+\gamma-t)\,,\\(v+b_1)^2(v+b_2)^2(v+a_1) & = \gamma^2 v\,.\\
\end{cases}
\end{equation}
Third equation in \eqref{nonlsyst(a)} shows that when $v\searrow 0$, then $a_1\searrow 0$ or $b_1b_2 \rightarrow 0$ and, hence, $b_1 \rightarrow 0$.

If $a_1\searrow 0$, taking limits in the first two equations in \eqref{nonlsyst(a)}, we have for the limits $b_1^0,b_2^0$ of $b_1,b_2$, respectively:
\begin{equation*}\label{limita1}
b_1^0+b_2^0 = -\beta\,,\;(b_1^0+b_2^0)^2+2b_1^0b_2^0 = \beta^2 + 2(\gamma-t)\,,
\end{equation*}
and so,
\begin{equation}\label{limita1}
b_1^0+b_2^0 = -\beta\,,\;b_1^0b_2^0 = \gamma-t\,.
\end{equation}
On the other hand, if $a_1 \nrightarrow 0$ and $b_1\rightarrow 0$, we have that the respective limits $a_1^0,b_2^0$ satisfy the nonlinear system:
\begin{equation}\label{limita2}
a_1^0 + 2b_2^0 = -2\beta\,,\;b_2^0 (b_2^0+2a_1^0) = \beta^2 + 2(\gamma-t)\,,
\end{equation}
where we are looking for solutions $a_1^0 > 0$ and $b_2^0 \leq 0\,.$

Now, let us consider the second option, that is, when $S_{v} = [0,a_1]\cup [a_2,a_3]$. This situation is drawn in scenario (c) and, hence, \eqref{scen(c)} yields:
\begin{equation}\label{nonlsyst(c)}
\begin{cases} a_1+a_2+a_3+2b_1 & = -2(\beta +v)\,,\\b_1^2+2b_1(a_1+a_2+a_3)+a_1a_2+a_1a_3+a_2a_3 & = (\beta +v)^2 + 2(\beta v+\gamma-t)\,,\\(v+b_1)^2(v+a_1)(v+a_2)(v+a_3) & = \gamma^2 v\,,\\
\end{cases}
\end{equation}
with $0<a_1<b_1<a_2<a_3$ and $b_1$ given by \eqref{center}. Thus, by the same argument used in the proof of Lemma \ref{lem:regular}, we conclude that necessarily $b_1\searrow 0$ too.

Now, denoting by $a_2^0,a_3^0$ the limits of $a_2,a_3$, respectively, the following system holds:
\begin{equation}\label{systc}
a_2^0+a_3^0 = -2\beta \,,\;a_2^0\,a_3^0 = \beta ^2 + 2(\gamma -t)\,,
\end{equation}
and we have to look for possible solutions of \eqref{systc} such that $0\leq a_2^0 \leq a_3^0$.

Once all the possible settings have been reviewed, let us suppose now that $t\leq \gamma\,.$ It is easy to check that in this case only scenario (a) with $a_1\searrow 0$ is feasible. Indeed, system \eqref{limita1} yields:
\begin{equation*}
b_1^0\,,\,b_2^0 \begin{cases} \geq 0\,,\;\; & \text{if}\;\;\beta \leq -2\,\sqrt{\gamma -t}\,, \\
< 0\,,\;\; & \text{if}\;\;\beta > 2\,\sqrt{\gamma -t}\,, \\ \in \C \setminus \R\,,\,b_2^0 = \overline{b_1^0}\,,\;\; & \text{if}\;\;|\beta | < 2\,\sqrt{\gamma -t}\,.
\end{cases}
\end{equation*}
In all these instances, the limit measure is $\sigma = t\delta_0\,,$ since
$$\lim_{v\searrow 0}(-\widehat{\nu}(z)+\phi'(z)) = \frac{(z-b_1^0)(z-b_2^0)}{z}\,,$$ and, thus, setting $\dsty \lim_{v\searrow 0} \nu_v = \sigma\,,$ it yields,
$$\widehat{\sigma}(z) = z + \beta + \frac{\gamma}{z} - \frac{(z-b_1^0)(z-b_2^0)}{z}\,,$$ and then, by \eqref{limita1},
$$\widehat{\sigma}(z) = z + \beta + \frac{\gamma}{z} - \frac{z^2+\beta z+\gamma -t}{z}\,=\,\frac{t}{z}\,.$$ This proves the first assertion in \eqref{limnoadmiss}.

Suppose, now, that $t>\gamma \,.$ In this case, scenarios (a), with $a_1\searrow 0$ and $b_1\rightarrow 0$, and (c) are both feasible. Indeed, studying the system \eqref{limita2}, corresponding to the scenario (a) when $a_1\searrow 0$ and $b_1\rightarrow 0$, we find solutions
\begin{equation}\label{ab}
a_1^0 = -\frac{2\beta}{3} + \frac{2}{3}\,\sqrt{\beta^2+6(t-\gamma )}\,>0\,,\;
b_2^0 = -\frac{2\beta}{3} - \frac{1}{3}\,\sqrt{\beta^2+6(t-\gamma )}\,<0\,,
\end{equation}
provided that $t-\gamma \geq 0$ and $\dsty \beta < -\sqrt{2(t-\gamma)}\,.$ In this case, we have that $$\widehat{\sigma}(z) = \,\frac{t}{z}\,+z + \beta\,-(x-b_2^0)\,\sqrt{\frac{z-a_1^0}{z}}\,,$$ and from \eqref{ab} it is not hard to check that $$\widehat{\rho}(z) = z + \beta\,-(x-b_2^0)\,\sqrt{\frac{z-a_1^0}{z}}$$ is the Cauchy transform of the equilibrium measure of total mass $t-\gamma >0$ in $[0,+\infty)$ in the external field $\psi(z) = \frac{z^2}{2} + \beta z\,.$ Observe that in this case the limit measure has a Dirac mass at the origin plus an absolutely continuous part whose support also has the origin as an endpoint. This establishes the second assertion in \eqref{limnoadmiss} for the case where $\dsty \beta < -\sqrt{2(t-\gamma)}\,.$

Thus, it only remains to analyze what happens when $t>\gamma \,$ and $\dsty \beta < -\sqrt{2(t-\gamma)}\,.$
But, for this case, system \eqref{systc} provides solutions
$$0 \leq a_2^0 = -\beta - \sqrt{2(t-\gamma)}\,\leq a_3^0 = -\beta + \sqrt{2(t-\gamma)}\,.$$

In this case, we have that $$-\widehat{\sigma}(z)\,+\,\phi'(z) = \sqrt{(z-a_2^0)(z-a_3^0)} = \sqrt{(z+\beta)^2-2(t-\gamma)}\,,$$
and, thus, $$\widehat{\sigma}(z) = \,\frac{\gamma}{z}\,+\,\left(z+\beta\,-\,\sqrt{(z+\beta)^2-2(t-\gamma)}\right) = \,\frac{\gamma}{z}\,+\,\frac{2}{z+\beta\,+\,\sqrt{(z+\beta)^2-2(t-\gamma)}}\,.$$

Observe that in this last situation, for $t>\gamma$ the absolutely continuous part of the measure has not the origin as an endpoint.

This renders the proof of the second part of Theorem \ref{thm:mainv}.

\vspace{1cm}

R. Orive,

Universidad de La Laguna, Canary Islands, Spain.

rorive@ull.es.

Research partially supported by Ministerio de Ciencia e Innovaci\'{o}n under grant MTM2011--28781.

\vspace{.5cm}

J. S\'{a}nchez Lara,

Universidad de Granada, Spain.

jslara@ugr.es.

Partially supported by research project of Junta de Andaluc\'{\i}a under grant FQM229.
\end{document}